\definecolor{color1}{RGB}{31,55, 61}
\definecolor{background}{RGB}{238,237,229}
\definecolor{comp}{RGB}{98,99,101}
\definecolor{phys}{RGB}{31,55,61}
\definecolor{math}{RGB}{133,63,72}
\definecolor{urban}{RGB}{51,95,60}
\definecolor{color1}{RGB}{133,63,72}
\newcommand\norm[1]{\left\lVert#1\right\rVert}
\newtheorem{prop}{Proposition}[section]
\newtheorem{theorem}{Theorem}[section]
\newtheorem{lemma}{Lemma}[section]
\theoremstyle{remark}
\numberwithin{equation}{section}
\begin{document}
\title{Coupled Stochastic Allen-Cahn equations: Existence and Uniqueness }
\author{Tran Hoa Phu}
\newcommand{\Addresses}{{
  \bigskip
  \footnotesize

  Tran Hoa Phu, \textsc{Department of Mathematics, Gran Sasso Science Institute, Viale Francesco Crispi, 7 - 67100 L'Aquila (ITALY)}\par\nopagebreak
  \textit{E-mail address}: \texttt{hoaphu.tran@gssi.it}
}}

\maketitle
\begin{abstract}
In this paper, we concern a system of stochastic PDE's. Our system consists of two components. Each component evolves according to the sotchastic Allen-Cahn equation with a symmetric double well potential and with addtional small space-time white noise. The two components interact with each other by an attractive linear force. We aim to give a global existence and uniqueness result in a space of continuous functions on $\mathbb{R} \times \mathbb{R}^+$. We apply methods proposed by Doering \cite{Doering1987}.
\end{abstract}
\section{Introduction and main result}
We investigate the following SPDE on unbounded domain $\mathbb{R} \times \mathbb{R}^+$ 
\begin{equation}\label{EX class 1}
\begin{aligned}
&\partial_t m_1 =\dfrac{1}{2} \partial_{xx}m_1 -V'(m_1) + \lambda(m_2-m_1)+ \dot{\alpha_1}(x,t), \quad x \in \mathbb{R}, t \geq 0\\
&\partial_t m_2 =\dfrac{1}{2} \partial_{xx}m_2 -V'(m_2) +  \lambda(m_1-m_2)+  \dot{\alpha_2}(x,t), \quad x \in \mathbb{R}, t \geq 0\\
\end{aligned}
\end{equation}
where $V'(m) = m^3 -m$ is the derivative with respect to $m$ of the double well polynomial $V(m) = \dfrac{1}{4}m^4 -\dfrac{1}{2}m^2$ and $\lambda$ is a given positive constant. Two terms $\dot{\alpha_1}(x,t)$ and $\dot{\alpha_2}(x,t)$ are two independent space-time white noises. We refer to Walsh\cite{Walsh2006}, Faris and Jona-Lasinio \cite{Faris1982} and Da Prato and Zabczyk \cite{Zabc} for the SPDE's theory applied to \eqref{EX class 1}. The integral version of \eqref{EX class 1} is
\begin{equation} \label{1D: original SPDE}
\begin{aligned}
&m_{1,t} = H_t m_{1,0 }- \int_0^t H_{t-s}\Big[V'(m_{1,s}) + \lambda\Big(m_{1,s}-m_{2,s}\Big)\Big]ds + Z_{1,t}\\
&m_{2,t} = H_t m_{2,0}- \int_0^t H_{t-s}\Big[V'(m_{2,s}) + \lambda\Big(m_{2,s}-m_{1,s}\Big)\Big]ds + Z_{2,t}
\end{aligned}
\end{equation}
where 
\begin{equation}
\begin{aligned}
&H_t \; \text{is the kernel of the heat operator} \;  \partial_t - \dfrac{1}{2}\partial_{xx}, \\
 &Z_{i,t} = \int_0^t H_{t-s}\dot{\alpha}_{i,s} ds \quad \forall i=1,2\\
\end{aligned}
\end{equation}
For every $\alpha > 0$, let $\bigg( C^{\alpha}(\mathbb{R}),\norm{.}_{C^{\alpha}(\mathbb{R})} \bigg)$ and  $\bigg( C^\alpha(\mathbb{R}\times \mathbb{R}^+), \norm{.}_{C^\alpha(\mathbb{R}\times \mathbb{R}^+)} \bigg)$  be Banach spaces of continuous functions with norm
\begin{equation}
\begin{aligned}
\norm{f}_{C^{\alpha}(\mathbb{R})} &= \sup_{x \in \mathbb{R}}e^{-\alpha |x|}f(x)\\
\norm{f}_{C^{\alpha}(\mathbb{R} \times \mathbb{R}^+)} &=  \sup_{t \geq 0 } \sup_{x \in \mathbb{R}} \exp{ \bigg(-\dfrac{\alpha^2}{2} t -\alpha |x| \bigg)}|f(x,t)| 
\end{aligned}  
  \end{equation}

Let $\mathcal{F}_t$ be the filtration given by $\mathcal{F}_t :=  \sigma \lbrace Z_{i,t}(x,s); x \in \mathbb{R}, s \leq t, i=1,2 \rbrace$. We now state the main theorem
\begin{theorem}\label{main theorem model 1}
If $(m_{1,0},m_{2,0}) \in (C^{\alpha}(\mathbb{R}))^2$ for any $\alpha > 0$ then 
\begin{equation}
\begin{aligned}
&m_{1,t} = H_t m_{1,0} - \int_0^t H_{t-s}\Big[V'(m_{1,s}) + \lambda\Big(m_{1,s}-m_{2,s}\Big)\Big]ds + Z_{1,t}\\
&m_{2,t} = H_t m_{2,0} - \int_0^t H_{t-s}\Big[V'(m_{2,s}) + \lambda\Big(m_{2,s}-m_{1,s}\Big)\Big]ds + Z_{2,t}
\end{aligned}
\end{equation}
has a unique $\mathcal{F}_t$-adapted process $(m_{1,t},m_{2,t}) \in \bigg( C^{\alpha}(\mathbb{R} \times \mathbb{R}^+) \bigg)^2$ for any $\alpha > 0$.
\end{theorem}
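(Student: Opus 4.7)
The approach is to split off the stochastic forcing and turn the SPDE into a deterministic random integral equation, prove local existence by a contraction mapping after truncating the cubic, and then extend globally via an a priori bound that exploits the dissipativity of $-V'$ together with the sign of the linear coupling.

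First, I would check that almost surely $Z_{i,\cdot} \in C^\alpha(\mathbb{R}\times\mathbb{R}^+)$ for every $\alpha > 0$. The covariance of the Gaussian stochastic convolution scales like the heat kernel, so Kolmogorov's continuity criterion together with Gaussian concentration absorbs the exponential factor $\exp(-\alpha|x|-\alpha^2 t/2)$ in the weighted norm. Setting $u_i := m_i - Z_i$ removes the noise from the system: $u_i$ satisfies a random integral equation in which the realization of $Z$ enters only as a known continuous forcing term, so that from here on everything can be done pathwise for each fixed $\omega$.

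For each fixed realization the next step is local-in-time existence by Banach fixed point in $(C^\alpha(\mathbb{R}\times[0,T]))^2$. The key analytic input is the weighted heat-kernel estimate $H_t(e^{\alpha|\cdot|})(x) \leq C\, e^{\alpha^2 t/2}\, e^{\alpha|x|}$, which is precisely the reason the definition of $C^\alpha(\mathbb{R}\times\mathbb{R}^+)$ carries the time weight $e^{-\alpha^2 t/2}$; it says that $H_t$ acts boundedly on $C^\alpha(\mathbb{R})$ with operator norm $\leq C e^{\alpha^2 t/2}$. Truncating the cubic at height $N$ renders the resulting Nemytskii map globally Lipschitz on $C^\alpha(\mathbb{R})$ with constant of order $N^2$, and the linear coupling is trivially Lipschitz, so choosing $T$ small enough yields a contraction and hence a local mild solution for the truncated problem.

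The main obstacle is the a priori bound needed to remove the truncation and extend to arbitrary times. Here I would exploit the sign structure of the nonlinearity: $-V'(m)m = m^2 - m^4$ is strongly dissipative for large $|m|$, and when the two equations are summed, the symmetric coupling contributes $-\lambda(m_1-m_2)^2 \leq 0$, so it only helps. Following Doering, one obtains a weighted pointwise estimate---either by comparing each $u_i$ with the scalar ODE $\dot y = y - y^3$ driven by a forcing controlled by $\norm{Z_{i,s}}_{C^\alpha(\mathbb{R})}$, or by a direct argument on the quantity $e^{-\alpha|x|-\alpha^2 t/2}|u_i(x,t)|$ after commuting the weight through $H_t$---of the schematic form
\begin{equation*}
\sup_{s\leq t}\norm{u_{i,s}}_{C^\alpha(\mathbb{R})} \;\leq\; F\!\Big(\norm{m_{i,0}}_{C^\alpha(\mathbb{R})},\, \sup_{s\leq t}\norm{Z_{1,s}}_{C^\alpha(\mathbb{R})},\, \sup_{s\leq t}\norm{Z_{2,s}}_{C^\alpha(\mathbb{R})}\Big),
\end{equation*}
finite on every compact time interval by Step 1. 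This allows $N$ to be chosen large enough on each finite window that the truncated and untruncated problems coincide, yielding a global solution. Uniqueness follows by subtracting two solutions and applying Grönwall's inequality in the weighted norm, where the local Lipschitz constant of $V'$ is controlled by the same a priori bound.
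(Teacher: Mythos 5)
Your plan (subtract the noise, truncate the nonlinearity, prove an a~priori bound, remove the truncation) is a legitimate alternative architecture to the paper's, which instead truncates in \emph{space} by a compactly supported cutoff $\Lambda$ and passes to the limit $\Lambda\to 1$ in $L^p(\mu)$ with $\mu=e^{-\alpha^2 t/2-\alpha|x|}\,dx\,dt$. But both load-bearing steps of your plan have genuine gaps. First, the a~priori bound: the two strategies you offer are not interchangeable and neither is carried out. The observation that summing the two equations produces $-\lambda(m_1-m_2)^2\le 0$ is an $L^2$-energy identity (it is exactly what the paper exploits, in the form $(X_1^{2n+1}-X_2^{2n+1})(X_1-X_2)\ge 0$); it cannot be inserted into a pointwise comparison with $\dot y=y-y^3$. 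For the pointwise route you would need a comparison principle for the \emph{system} (available here only because the off-diagonal coupling $+\lambda m_2$ is cooperative), a Phragm\'en--Lindel\"of-type maximum principle valid for solutions growing like $e^{\alpha|x|}$, and a truncation $V_N'$ with $\mathrm{sgn}(V_N'(m))=\mathrm{sgn}(m)$ for $|m|>N$ --- otherwise the natural supersolution $Ae^{\alpha|x|+bt}$, which exceeds every height $N$ somewhere, is not a supersolution of the truncated problem. None of this appears in the proposal. The paper avoids all of it with the positivity lemma $\int f^{2n+1}(\partial_t-\tfrac12\Delta)f\,d\mu_{T,\Lambda}\ge 0$ (Lemmas \ref{1D: basic lemma} and \ref{1D: basic lemma 2}), which yields $L^p(\mu)$ bounds uniform in the cutoff, and then recovers continuity from the smoothing bound $H:L^p(\mu)\to C^\alpha(\mathbb{R}\times\mathbb{R}^+)$ for $p>3/2$ (Lemma \ref{H Lp to C}). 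Also, the fact that $Z_i\in C^\alpha(\mathbb{R}\times\mathbb{R}^+)$ a.s.\ for every $\alpha>0$ is used but not proved; it should at least be cited from \cite{Doering1987}.

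Second, uniqueness by Gr\"onwall fails as stated. A solution in $C^\alpha(\mathbb{R}\times\mathbb{R}^+)$ satisfies only $|m(x,t)|\le Ke^{\alpha|x|+\alpha^2t/2}$, so on the set where the two solutions live the ``local Lipschitz constant'' of $V'$ is of order $e^{2\alpha|x|}$, unbounded in $x$: your a~priori bound controls the weighted norm, not $\norm{m}_\infty$. Equivalently, the product of two functions of $C^\alpha$ lies only in $C^{2\alpha}$, so the inequality you actually obtain for $w=m'-m$ bounds $\norm{w_t}_{C^{3\alpha}}$ by $\int_0^t\norm{w_s}_{C^{\alpha}}\,ds$ and does not close; rescuing it requires an iteration over an increasing sequence of weights, which is a different and more delicate argument. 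This is precisely why the paper proves uniqueness by monotonicity rather than by a Lipschitz estimate: for $\beta$ large one has $(V'(x)+\beta x-V'(y)-\beta y)(x-y)\ge k(x-y)^4$, and testing the equation for the difference with $\big(e^{-\beta t}(m_1'-m_1)\big)^{2n+1}$ against $d\mu$ forces $m_i'=m_i$ without any bound on $V''$ ever being used. If you want to keep your architecture, you should replace the Gr\"onwall step by this monotonicity argument, and use the same device (rather than a pointwise comparison) for the a~priori bound.
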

\section{Finite Volume Equations}
Let  $F_{i,t} = H_t m_{i,0} + Z_{i,t} \; \forall i=1,2$. Since $F_{1,t}$ and $F_{2,t}$  are not bounded on unbounded domain, so it is convenient to consider the following SPDE
\begin{equation} \label{1D: FV approx}
\begin{aligned}
&m_{1,\Lambda, t} =  - \int_0^t  H_{t-s}\Lambda\Big[V'(m_{1,\Lambda,s}) + \lambda\Big(m_{1,\Lambda,s}-m_{2,\Lambda,s}\Big)\Big]ds + \Lambda F_{1,t}\\
&m_{2,\Lambda,t} =  - \int_0^t H_{t-s}\Lambda\Big[V'(m_{2,\Lambda,s}) + \lambda\Big(m_{2,\Lambda,s}-m_{1,\Lambda,s}\Big)\Big]ds + \Lambda F_{2,t}
\end{aligned}
\end{equation}
where $0 \leq \Lambda(x) \leq 1$ is a continuous function of compact support on $\mathbb{R}$.

Firstly, it is essential to establish a local existence result  by using Picard iterations as in \cite{Doering1987}.
\begin{prop}
There exists $T_0 > 0$ so that there is a continuous bounded solution to \eqref{1D: FV approx}
on $\mathbb{R} \times [0,T_0]$
\end{prop}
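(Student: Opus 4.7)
The plan is to realise the right-hand side of \eqref{1D: FV approx} as a nonlinear map $\mathcal{T}$ acting on pairs $(u_1,u_2)$ in the Banach space $X_{T_0} := \bigl(C_b(\mathbb{R}\times[0,T_0])\bigr)^2$ of bounded continuous functions, and to apply the Banach fixed-point theorem on a suitably chosen closed ball $B_R \subset X_{T_0}$. The key preliminary observation is that the inhomogeneity $\Lambda F_{i,t}=\Lambda\bigl(H_t m_{i,0}+Z_{i,t}\bigr)$ is almost surely bounded and continuous on $\mathbb{R}\times[0,T_0]$: indeed $\Lambda$ has compact support, $H_t m_{i,0}$ is jointly continuous in $(x,t)$ with at most $C^{\alpha}(\mathbb{R})$-type growth in $x$, and $Z_{i,t}$ admits a jointly continuous modification by Walsh's regularity theorem \cite{Walsh2006}. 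Hence there is an (almost surely) finite constant $M=M(\Lambda,F_1,F_2,T_0,\omega)$ with $\|\Lambda F_{i}\|_{\infty,\mathbb{R}\times[0,T_0]}\le M$ for $i=1,2$.

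Next I would verify that $\mathcal{T}$ maps $B_R$ into itself for an appropriate choice of $R$ and $T_0$. The heat kernel is $L^{\infty}$-contractive, $\|H_{t-s}f\|_{\infty}\le\|f\|_{\infty}$, and $0\le\Lambda\le1$; combined with the pointwise estimate $|V'(u_i)|+\lambda|u_i-u_j|\le R^3+(1+2\lambda)R$ valid on $B_R$, this gives
\begin{equation*}
\|\mathcal{T}(u_1,u_2)\|_{\infty}\;\le\;T_0\bigl(R^3+(1+2\lambda)R\bigr)+M .
\end{equation*}
Choosing $R=2M$ and then $T_0$ small enough (depending only on $M$ and $\lambda$) makes the right-hand side bounded by $R$, so $\mathcal{T}(B_R)\subset B_R$. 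Continuity of $\mathcal{T}(u_1,u_2)$ in $(x,t)$ follows from the standard smoothing properties of the heat semigroup acting on bounded continuous integrands, together with the continuity of $\Lambda F_{i}$.

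For the contraction step I would use the mean value theorem on $V'$ restricted to $[-R,R]$: since $V''(m)=3m^2-1$, the function $V'$ is Lipschitz on $[-R,R]$ with constant $L_R=3R^2+1$. Repeating the $L^{\infty}$ estimate applied to the difference $\mathcal{T}(u_1,u_2)-\mathcal{T}(v_1,v_2)$ yields
\begin{equation*}
\|\mathcal{T}(u)-\mathcal{T}(v)\|_{\infty}\;\le\; T_0\bigl(L_R+2\lambda\bigr)\,\|u-v\|_{\infty},
\end{equation*}
which is a strict contraction once $T_0$ is shrunk further. Banach's fixed-point theorem then produces the unique element of $B_R$ solving \eqref{1D: FV approx} on $\mathbb{R}\times[0,T_0]$.

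The principal obstacle is the cubic nonlinearity $V'(m)=m^3-m$, which is not globally Lipschitz, so a direct contraction argument on all of $X_{T_0}$ is impossible. Confining the iteration to the ball $B_R$ and tuning its radius through the forcing bound $M$ is precisely the Doering-type device \cite{Doering1987} that bypasses this difficulty; the price to pay is that $T_0=T_0(M,\lambda)$ deteriorates as $M$ grows, which is exactly why a priori bounds (independent of $T_0$) will be required later to iterate the local result into the global existence claim of Theorem~\ref{main theorem model 1}.
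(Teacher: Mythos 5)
Your proposal is correct and follows essentially the same route as the paper: a Banach fixed-point argument on a closed ball of radius comparable to $2\|\Lambda F_i\|_\infty$ in the bounded continuous functions, using $\|H_{t-s}f\|_\infty\le\|f\|_\infty$ for the invariance estimate and the local Lipschitz bound $3R^2+1$ for $V'$ on the ball for the contraction, with $T_0$ chosen small depending on the forcing bound and $\lambda$. The only (welcome) addition is your explicit justification that $\Lambda F_i$ is a.s.\ bounded and continuous, which the paper takes for granted.
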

\begin{proof}
We define the map $P$
\begin{equation}
\begin{aligned}
(m_{1}, m_{2}) \rightarrow \bigg(&-H\Lambda  \bigg[ V'(m_{1}) + \lambda(m_{1} - m_{2}) \bigg] + \Lambda F_1,\\
&-H\Lambda  \bigg[ V'(m_{2}) + \lambda(m_{2} - m_{1}) \bigg]  + \Lambda F_2 \bigg)
\end{aligned}
\end{equation}
We shall prove the map $P$ takes the closed set 
\begin{equation}
\begin{aligned}
B=\bigg\lbrace (m_{1}, m_{2} ) \in &C(\mathbb{R}\times [0,T])^2: \\
&\norm{m_i}_\infty \leq 2\bigg( \norm{\Lambda F_1}_\infty  + \norm{\Lambda F_2}_\infty \bigg) \quad \forall i=1,2\bigg\rbrace
\end{aligned}
\end{equation}
 into itself for $T$ sufficiently small. In fact, let $(m_{1},m_{2}) \in B$ and setting $K := \norm{\Lambda F_1}_\infty + \norm{\Lambda F_2}_\infty$, we estimate 
\begin{equation}
\begin{aligned}
 &\norm{-H\Lambda[V'(m_{1}) + \lambda(m_{1} - m_{2})] + \Lambda F_1}_\infty \\
 & \leq \norm{V'(m_{1}) + \lambda(m_{1} - m_{2})}_\infty\times \int_0^{T}\int_{\mathbb{R}}H_{t-s}(x,y)dyds + \norm{\Lambda F_1}_\infty \\
 &\leq T \bigg( \norm{m_{1}}_\infty^3 + \norm{m_{1}}_\infty + \lambda (\norm{m_{1}}_\infty + \norm{m_{2}}_\infty)\bigg) + \norm{\Lambda F_1}_\infty \\
 &\leq 8T K(K^2 + \dfrac{1}{8} + \dfrac{\lambda}{4}) + \norm{\Lambda F_1}_\infty \quad (\text{note that } \norm{m_{1}}_\infty,\norm{m_{2}}_\infty \leq 2K)
\end{aligned}
\end{equation}  
Choosing $T>0$ such that 
\begin{equation}\label{2d fixed point 1}
8T(K^2 + \dfrac{1}{8} + \dfrac{\lambda}{4}) \leq 1 \Leftrightarrow T \leq \dfrac{1}{8(K^2 + 1/8 + \lambda/4)}
\end{equation}
, we derive
\begin{equation}
 \norm{-H\Lambda[V'(m_{1}) + \lambda(m_{1} - m_{2})] + \Lambda F_1}_\infty \leq K+\norm{\Lambda F_1}_\infty \leq 2K
\end{equation}
Since $V'$ is a polynomial, so it is Lipschitz continuous on bounded set in $C(\mathbb{R}\times [0,T])$. We claim that $P$ is a contraction mapping from $B \rightarrow B$ for $T_0$ sufficiently small. Taking $(m_1',m_2'),(m_1,m_2)\in B$ then
\begin{equation}
\begin{aligned}
P(m_1',m_2') &- P(m_1,m_2)\\
&= \bigg[ -H\Lambda \bigg ( V'(m_1')-V(m_1) + \lambda (m_1'-m_1 - m_2 + m_2')  \bigg),\\
&-H\Lambda \bigg ( V'(m_2')-V(m_2) + \lambda (m_2'-m_2 - m_1 + m_1')  \bigg)  \bigg]
\end{aligned}
\end{equation}  
Since
\begin{equation}
V'(a)-V'(b) = (a-b)(a^2 + ab +b^2 -1) \quad \forall a,b \in \mathbb{R}
\end{equation}
Hence, in the ball $B$
\begin{equation}
\begin{aligned}
&\norm{-H\Lambda  \bigg( V'(m_1')-V(m_1) + \lambda (m_1'-m_1 - m_2 + m_2')  \bigg)}_\infty\\
&\leq \bigg( V'(m_1')-V(m_1) + \lambda (m_1'-m_1 - m_2 + m_2')  \bigg)_\infty \int_0^{T} \int_\mathbb{R} H_{t-s}(x,y)dyds \\
&\leq T (12K^2 + 1)\norm{m_1'-m_1}_\infty + \lambda T\bigg( \norm{m_1'-m_1}_\infty + \norm{m_2'-m_2}_\infty \bigg)\\
&\leq T(12K^2 + 1+ 2\lambda) \norm{(m_1'-m_1,m_2'-m_2)}_\infty
\end{aligned}
\end{equation}
Thus, we have
\begin{equation}
\begin{aligned}
&\norm{P(m_1',m_2')-P(m_1,m_2)}_\infty\\
&\leq T(12K^2 + 1+ 2\lambda) \norm{(m_1'-m_1,m_2'-m_2)}_\infty
\end{aligned}
\end{equation}
Choosing $T$ such that 
\begin{equation}\label{2d fixed point 2}
T(12K^2 +1 + 2\lambda ) < 1 \Leftrightarrow T < \dfrac{1}{12K^2 +1 + 2\lambda}
\end{equation}
$P$ is a contraction mapping from $B \rightarrow B$. From \eqref{2d fixed point 1} and \eqref{2d fixed point 2}, $T_0$ should satisfies 
\begin{equation}
T_0 = \min \bigg( \dfrac{1}{8(K^2 + 1/8 + \lambda/4)}, \dfrac{1}{2(12K^2 +1 + 2\lambda)} \bigg)
\end{equation}
then $P$ has a unique fixed point by Contraction principle (Thereom \ref{contraction theorem}).

\end{proof}
To show global solution, it is sufficient to prove that
\begin{equation}
\sup_{t \in [0,T^*)} \sup_{x \in \mathbb{R}} |m_{i,\Lambda,t}(x)| < \infty \quad \forall i=1,2
\end{equation}
where $(m_{1,\Lambda,t},m_{2,\Lambda,t})$ is a continuous solution on $[0,T^*)$.
We recall the following useful lemma proved in Lemma 1,\cite{Doering1987}.
\begin{lemma}\label{1D: basic lemma}
If $f(x,t)$ and $(\partial_t - \dfrac{1}{2}\Delta)f$ are continuous on $\mathbb{R}\times [0,T]$ and $f(x,0)=0$, $|f|^{2n+2}$ and $|\partial_x f|^{2n+2} \in L^1(\mathbb{R} \times [0,T],dxdt)$, then
\begin{equation}
\int_0^T \int_{\mathbb{R}} f^{2n+1}(x,t)(\partial_t - \dfrac{1}{2}\Delta)f(x,t) dxdt \geq 0
\end{equation}
\end{lemma}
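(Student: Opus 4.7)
The plan is to split $\int_0^T\int_\mathbb{R} f^{2n+1}(\partial_t - \tfrac{1}{2}\partial_{xx})f\, dx\, dt$ into a time part and a spatial (Laplacian) part and show each is individually nonnegative.

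For the time part, the idea is to use the chain rule identity $f^{2n+1}\partial_t f = \frac{1}{2n+2}\partial_t(f^{2n+2})$, interchange the order of integration via Fubini, and apply the fundamental theorem of calculus in $t$ pointwise in $x$. The initial condition $f(x,0)=0$ kills the lower endpoint, leaving the nonnegative quantity $\frac{1}{2n+2}\int_\mathbb{R} f(x,T)^{2n+2}\, dx$. For the spatial part, the formal computation is integration by parts in $x$:
\[
-\tfrac{1}{2}\int_0^T\!\!\int_\mathbb{R} f^{2n+1}\partial_{xx}f\, dx\, dt = \tfrac{2n+1}{2}\int_0^T\!\!\int_\mathbb{R} f^{2n}(\partial_x f)^2\, dx\, dt \;\geq\; 0,
\]
provided the boundary term $f^{2n+1}\partial_x f\big|_{x=-\infty}^{x=+\infty}$ vanishes.

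The principal obstacle is rigorously justifying this boundary vanishing, since the hypotheses provide only global $L^1$ control of $|f|^{2n+2}$ and $|\partial_x f|^{2n+2}$, which does not directly yield pointwise decay at infinity. I would address this through a cutoff-and-limit scheme: introduce a smooth compactly supported $\chi_R(x)\in[0,1]$ equal to $1$ on $[-R,R]$, supported in $[-R-1,R+1]$, with $\|\chi_R'\|_\infty$ bounded uniformly in $R$. Performing the above manipulations against $\chi_R$ is legitimate (boundary terms are genuinely zero), and one is left to show that the error term
\[
\tfrac{1}{2}\int_0^T\!\!\int_\mathbb{R}\chi_R'(x)\, f^{2n+1}\partial_x f\, dx\, dt
\]
tends to zero as $R\to\infty$. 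This I would bound by Hölder's inequality with conjugate exponents $\tfrac{2n+2}{2n+1}$ and $2n+2$, obtaining a constant multiple of
\[
\bigg(\int_0^T\!\!\int_{R\leq|x|\leq R+1}|f|^{2n+2}\,dx\,dt\bigg)^{\!\frac{2n+1}{2n+2}}\bigg(\int_0^T\!\!\int_{R\leq|x|\leq R+1}|\partial_x f|^{2n+2}\,dx\,dt\bigg)^{\!\frac{1}{2n+2}},
\]
which vanishes as $R\to\infty$ by absolute continuity of the Lebesgue integral applied to the two $L^1$ hypotheses. Convergence of the cut-off versions of the interior terms to their full-line counterparts follows by dominated convergence, using $\chi_R\uparrow 1$ and the same $L^1$ bounds as dominating functions. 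Adding the nonnegative time and spatial contributions in the limit yields the stated inequality.
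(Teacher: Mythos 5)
Your argument is correct and is essentially the standard one: the paper itself gives no proof of this lemma but cites Lemma~1 of Doering (1987), whose proof is exactly your decomposition into the nonnegative terms $\frac{1}{2n+2}\int_{\mathbb{R}} f^{2n+2}(x,T)\,dx$ and $\frac{2n+1}{2}\int_0^T\!\!\int_{\mathbb{R}} f^{2n}(\partial_x f)^2\,dx\,dt$, with the boundary term controlled via a cutoff, H\"older with exponents $\tfrac{2n+2}{2n+1}$ and $2n+2$, and the $L^1$ hypotheses on $|f|^{2n+2}$ and $|\partial_x f|^{2n+2}$. Your treatment of the cutoff error term is the key point and you handle it correctly, so there is nothing to add.
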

\begin{lemma}
If $(m_{1,\Lambda},m_{2,\Lambda})$ satisfies \eqref{1D: FV approx} on $\mathbb{R}\times [0,T]$. Then $m_{i,\Lambda}$ and $\partial_x m_{i,\Lambda}$ vanish exponentially as $|x| \rightarrow \infty$ for all $t \leq T$.
\begin{proof}
Assume $x$ is not in supp($\Lambda$) and denote $V^*(m_{1,\Lambda}) = - V'(m_{1,\Lambda}) + \lambda(m_{2,\Lambda}-m_{1,\Lambda})$. Then
\begin{equation}
m_{1,\Lambda}(x,t)  = \int_0^t \int_{\mathbb{R}} \Lambda (y) H_{t-s}(x,y)V^*(m_{1,\Lambda})(y,s)dyds
\end{equation}
This implies
\begin{equation}\label{2d vanish exponentially 1}
|m_{1,\Lambda}(x,t)| \leq \sup_{s\leq T,y\in \mathbb{R}} |\Lambda(y) V^*(m_{1,\Lambda}(y,s))| \int_0^t \int_{y \in \text{supp}\Lambda} H_{t-s}(x,y)dyds
\end{equation}
and define $d(x,\text{supp}(\Lambda)):= \inf \lbrace |x-y|:y\in \text{supp}(\Lambda) \rbrace$, then
\begin{equation}\label{2d vanish exponentially 2}
\begin{aligned}
\int_0^t \int_{y \in \text{supp}\Lambda} H_{t-s}(x,y)dyds &\leq  \int_0^t \int_{y \in \text{supp}\Lambda}\dfrac{e^{-\dfrac{d^2(x,\text{supp}(\Lambda))}{2(t-s)}}}{\sqrt{2 \pi (t-s)}}dyds \\
&\leq \norm{\text{supp}(\Lambda)} \int_0^t \dfrac{e^{-\dfrac{d^2(x,\text{supp}(\Lambda))}{2(t-s)}}}{\sqrt{2 \pi (t-s)}}ds
\end{aligned} 
\end{equation}
here $\norm{\text{supp}(\Lambda)}$ is the volume of $\text{supp}(\Lambda)$.  By changing variables, $r:= \dfrac{1}{2(t-s)}
 \Rightarrow dr= 2r^2ds$. Thus $\forall t \leq T$, we get
 \begin{equation}\label{2d vanish exponentially 3}
 \begin{aligned}
 \int_0^t \dfrac{e^{-\dfrac{d^2(x,\text{supp}(\Lambda))}{2(t-s)}}}{\sqrt{2 \pi (t-s)}}ds &\leq \int_{1/2T}^{\infty} \dfrac{e^{-rd^2(x,\text{supp}(\Lambda))}}{2r^{3/2}}dr \\
 &\leq C T^{3/2}\dfrac{e^{-d^2(x,\text{supp}(\Lambda))/2T}}{d^2(x,\text{supp}(\Lambda))}
 \end{aligned}
 \end{equation}
From \eqref{2d vanish exponentially 1},\eqref{2d vanish exponentially 2} and \eqref{2d vanish exponentially 3}, $m_{1,\Lambda}$ vanish exponentially as $|x| \rightarrow \infty$.
To see $\partial_x m_{1,\Lambda}$ also vanish exponentially, we write
\begin{equation}
\partial_x m_{1,\Lambda}(x,t) = \int_0^t \int_{\mathbb{R}} \Lambda(y) H_{t-s}(x,y)\dfrac{-(x-y)}{t-s}V^*(m_{1,\Lambda}(y,s))dyds
\end{equation}
As a result,
\begin{equation}
\begin{aligned}
|&\partial_x m_{1,\Lambda}(x,t)|\\
&\leq \sup_{s\leq T,y\in \mathbb{R}} |\Lambda(y) V^*(m_{1,\Lambda}(y,s))| \int_0^T \int_{y \in \text{supp}(\Lambda)} H_{t-s}(x,y) \dfrac{|x-y|}{t-s}dyds
\end{aligned}
\end{equation}
Moreover,
\begin{equation}
\begin{aligned}
&\int_0^t \int_{y \in \text{supp}(\Lambda)} H_{t-s}(x,y) \dfrac{|x-y|}{t-s}dyds\\
&= \int_0^t \int_{y \in \text{supp}(\Lambda)}\dfrac{e^{-\dfrac{(x-y)^2}{2(t-s)}}}{\sqrt{2 \pi (t-s)}}.\dfrac{|x-y|}{t-s}dyds\\
&\leq \dfrac{1}{2}\int_0^t \int_{y \in \text{supp}(\Lambda)} \dfrac{e^{-\dfrac{(x-y)^2}{2(t-s)}}}{\sqrt{2 \pi (t-s)}}dyds + \dfrac{1}{2}\int_0^t \int_{y \in \text{supp}(\Lambda)} \dfrac{e^{-\dfrac{(x-y)^2}{2(t-s)}}}{\sqrt{2 \pi (t-s)}}\dfrac{(x-y)^2}{(t-s)^2}dyds\\
&=: D_1 + D_2
\end{aligned}
\end{equation}
$D_1$ could be estimated as above, and we get
\begin{equation}
D_1 \leq C_1 \norm{\text{supp}(\Lambda)} T^{3/2}\dfrac{e^{-d^2(x,\text{supp}(\Lambda))/2T}}{d^2(x,\text{supp}(\Lambda))}
\end{equation}
$D_2$ is represented as below
\begin{equation}
D_2 = \dfrac{1}{\sqrt{2\pi}} \int_0^t \int_{y \in \text{supp}(\Lambda)} \dfrac{e^{-\dfrac{(x-y)^2}{2(t-s)}}}{(t-s)^{3/2}}\dfrac{(x-y)^2}{2(t-s)}dyds
\end{equation}
and using the following inequality: $xe^{-x} \leq 2 e^{-x/2}\; \forall x \in \mathbb{R}$ we have
\begin{equation}
D_2 \leq C_2 \int_0^t \int_{y \in \text{supp}(\Lambda)} \dfrac{e^{-\dfrac{(x-y)^2}{4(t-s)}}}{(t-s)^{3/2}}dyds 
\end{equation}
and by changing variables, we derive
\begin{equation}
D_2 \leq C_3 \norm{\text{supp}(\Lambda)}T^{1/2}\dfrac{e^{-d^2(x,\text{supp}(\Lambda))/4T}}{d^2(x,\text{supp}(\Lambda))}
\end{equation}
Therefore, $\partial_x m_{1,\Lambda}$ also vanishes exponentially as $|x| \rightarrow \infty$. We also have analogous properties for $m_{2,\Lambda}$.
\end{proof}
\end{lemma}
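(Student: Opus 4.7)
The plan is to exploit two facts: first, the forcing terms $\Lambda F_{i,t}$ and $\Lambda V^{*}(\cdot)$ are supported inside the compact set $\mathrm{supp}(\Lambda)$; second, the heat kernel $H_{t-s}(x,y)$ carries a Gaussian factor $\exp\{-(x-y)^{2}/2(t-s)\}$ which, for $x$ far from $\mathrm{supp}(\Lambda)$, is uniformly small. Together these reduce the decay claim to estimating a heat-type integral over $y \in \mathrm{supp}(\Lambda)$ of a bounded integrand, which I then bound by $d(x,\mathrm{supp}(\Lambda))$-dependent quantities.

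More concretely, I would first fix $x \notin \mathrm{supp}(\Lambda)$ and rewrite $m_{1,\Lambda}(x,t)$ using the integral equation. Because $\Lambda(x)=0$, the $\Lambda F_{1,t}$ term drops out and we are left with a heat integral against $\Lambda(y)V^{*}(m_{1,\Lambda}(y,s))$. Since $(m_{1,\Lambda},m_{2,\Lambda})$ is a continuous bounded solution on $\mathbb{R}\times[0,T]$ and $V'$ is polynomial, the quantity $\sup_{s\le T,y}|\Lambda(y)V^{*}(m_{1,\Lambda}(y,s))|$ is finite. Pulling this constant out, the remaining double integral of $H_{t-s}(x,y)$ over $y\in\mathrm{supp}(\Lambda)$ and $s\in[0,t]$ is controlled, via $|x-y|\geq d(x,\mathrm{supp}(\Lambda))$, by $|\mathrm{supp}(\Lambda)|\int_0^t (2\pi(t-s))^{-1/2}\exp\{-d(x,\mathrm{supp}(\Lambda))^{2}/2(t-s)\}\,ds$. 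A change of variables $r=1/(2(t-s))$ turns this into an incomplete Gaussian integral yielding the stated bound $C\,T^{3/2}\,d^{-2}(x,\mathrm{supp}(\Lambda))\exp\{-d^{2}(x,\mathrm{supp}(\Lambda))/2T\}$, hence exponential decay in $|x|$.

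For $\partial_x m_{1,\Lambda}$, I would differentiate under the integral; since $\Lambda(y)V^{*}(m_{1,\Lambda}(y,s))$ does not depend on $x$, only the heat kernel is differentiated, producing the additional factor $-(x-y)/(t-s)$. Splitting $|x-y|/(t-s)$ via $ab \le \tfrac12(a^{2}+b^{2})$ into two pieces $D_{1},D_{2}$, the first is handled exactly as above. For the singular piece $D_{2}$, which carries $(x-y)^{2}/(t-s)^{2}$, the standard trick $ue^{-u}\leq 2e^{-u/2}$ applied with $u=(x-y)^{2}/2(t-s)$ absorbs the polynomial blow-up into a halved Gaussian, leaving an expression of the same form as before (with $4T$ replacing $2T$ in the exponent), which after the same change of variables gives the desired exponential bound.

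The only delicate point will be justifying the interchange of differentiation and integration near $s=t$, where the integrand is singular: the argument needs the integrability provided precisely by the $xe^{-x}\leq 2e^{-x/2}$ estimate, since without it the naive bound $|x-y|/(t-s) \cdot (t-s)^{-1/2}$ is not locally integrable in $s$. Once this local $L^{1}$ control in $s$ is in place (using that $d(x,\mathrm{supp}(\Lambda))>0$ for the fixed exterior $x$), dominated convergence legitimizes the differentiation and the remainder is routine. The argument for $m_{2,\Lambda}$ is identical by symmetry of the system in the indices $1,2$.
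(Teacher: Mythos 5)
Your proposal is correct and follows essentially the same route as the paper: restrict to $x\notin\mathrm{supp}(\Lambda)$ so the $\Lambda F_1$ term vanishes, bound the heat integral over the compact support via $|x-y|\ge d(x,\mathrm{supp}(\Lambda))$ and the substitution $r=1/(2(t-s))$, and for the derivative split with $ab\le\tfrac12(a^2+b^2)$ and absorb the singular factor using $ue^{-u}\le 2e^{-u/2}$. Your additional remark on justifying differentiation under the integral is a sensible point of rigor the paper leaves implicit, but it does not change the argument.
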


\begin{lemma} \label{bounded in Lp, finite}
 Let $(m_{1,\Lambda},m_{2,\Lambda})$  be a continuous solution to \eqref{1D: FV approx}
on $\mathbb{R}\times [0,T^*)$, then for each $\alpha >0$ and $p \geq 1$, $m_{1,\Lambda}, \; m_{2,\Lambda} \in L^p(\mathbb{R}\times [0,T],\Lambda (x)dxdt)$, i.e, t, here exists a constant $C^*=C^*(\alpha,p,T^*)$  such that
\begin{equation}\label{1D: uniform bounded T star}
\norm{m_{i,\Lambda}}_{p }^{T^*,\Lambda} \leq C \quad \forall i=1,2
\end{equation}
here $\norm{f}_{p}^{T,\Lambda} :=  \bigg( \int_0^T \int_\mathbb{R} |f|^p(x,t)\Lambda(x) dxdt \bigg)^{1/p}$
\end{lemma}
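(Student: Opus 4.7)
The plan is to adapt Doering's energy estimate to the coupled system. Set $g_i := m_{i,\Lambda} - \Lambda F_i$ for $i=1,2$, so that $g_i(x,0)=0$ and $g_i$ solves, in a classical sense on $\mathbb{R}\times[0,T]$ for every $T<T^*$,
\begin{equation}
(\partial_t - \tfrac{1}{2}\partial_{xx}) g_i \;=\; -\Lambda\bigl[V'(m_{i,\Lambda}) + \lambda(m_{i,\Lambda} - m_{j,\Lambda})\bigr],\qquad j\ne i.
\end{equation}
Because $g_i$ is the heat convolution of a bounded continuous function supported in $\text{supp}(\Lambda)$, it is smooth in $x$ for $t>0$. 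Combined with the exponential decay of $m_{i,\Lambda}$ and $\partial_x m_{i,\Lambda}$ proved in the previous lemma (note that $g_i=m_{i,\Lambda}$ outside $\text{supp}(\Lambda)$), this shows $g_i$ and $\partial_x g_i$ vanish exponentially as $|x|\to\infty$, so the integrability hypotheses of Lemma \ref{1D: basic lemma} hold for every power $2n+2$.

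Next I would multiply the PDE for $g_i$ by $g_i^{2n+1}$, integrate over $\mathbb{R}\times[0,T]$, and invoke Lemma \ref{1D: basic lemma} to obtain
\begin{equation}
\int_0^T\!\!\int_{\mathbb{R}} \Lambda\, g_i^{2n+1}\bigl[V'(m_{i,\Lambda}) + \lambda(m_{i,\Lambda}-m_{j,\Lambda})\bigr]\,dx\,dt \;\le\; 0.
\end{equation}
Summing for $i=1$ and $i=2$ and substituting $m_i - m_j = (g_i - g_j) + \Lambda(F_i - F_j)$, the coupling splits into a \emph{good} term $\lambda\int\Lambda(g_1^{2n+1}-g_2^{2n+1})(g_1-g_2)\ge 0$ (by monotonicity of $x\mapsto x^{2n+1}$) plus a remainder linear in $\Lambda(F_1-F_2)$. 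In parallel, expanding $V'(m_i) = (g_i+\Lambda F_i)^3 - (g_i+\Lambda F_i)$ shows that $\Lambda g_i^{2n+4}$ is the leading positive term, while every cross product involves $g_i$ to a power strictly less than $2n+4$ and can be dominated by $\varepsilon\,\Lambda g_i^{2n+4} + C_\varepsilon\,\Lambda(\Lambda F_i)^{2n+4}$ via Young's inequality.

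After choosing $\varepsilon$ small enough to absorb the bad pieces into the leading term, I would arrive at
\begin{equation}
\int_0^{T^*}\!\!\int_{\mathbb{R}} \Lambda\bigl(g_1^{2n+4} + g_2^{2n+4}\bigr)\,dx\,dt \;\le\; C\Bigl(1 + \sup_{i,\,t\le T^*,\,x\in\mathbb{R}}|\Lambda(x) F_i(x,t)|\Bigr)^{2n+4},
\end{equation}
with $C$ depending only on $n$, $\lambda$, $T^*$ and $\|\Lambda\|_{L^1(\mathbb{R})}$. The right-hand side is finite almost surely since $\Lambda$ has compact support and $F_i = H_t m_{i,0} + Z_{i,t}$ is continuous on compacts, with $\alpha$-dependence entering only through the estimate $|H_t m_{i,0}(x)|\le C(\alpha,T^*)$ on $\text{supp}(\Lambda)$ coming from $m_{i,0}\in C^\alpha(\mathbb{R})$. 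Since $\Lambda(x)\,dx\,dt$ is a finite measure on $\mathbb{R}\times[0,T^*]$, the inclusion $L^{2n+4}\subset L^p$ yields the claim for arbitrary $p\ge 1$ on choosing $n$ with $2n+4\ge p$ and recombining $m_{i,\Lambda}=g_i+\Lambda F_i$. The main obstacle I expect is the bookkeeping in the Young step: the coupling term and the lower-order contributions from $V'$ generate many mixed monomials in $g_i$, $g_j$, and $\Lambda F_k$ that must be split carefully so the coefficient of $\int\Lambda(g_1^{2n+4}+g_2^{2n+4})$ remains strictly positive after summation over $i$.
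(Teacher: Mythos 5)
Your proposal is correct and rests on the same engine as the paper: apply Lemma \ref{1D: basic lemma} to $m_{i,\Lambda}-\Lambda F_i$, isolate the coercive term $\int\Lambda\,m_{i,\Lambda}^{2n+4}$ coming from the cubic, absorb everything else by Young's inequality, and pass from $L^{2n+4}$ of the finite measure $\Lambda(x)\,dx\,dt$ to $L^p$. The one genuine divergence is your treatment of the coupling: you sum the $i=1$ and $i=2$ estimates and discard $\lambda\int\Lambda(g_1^{2n+1}-g_2^{2n+1})(g_1-g_2)\ge 0$ using the monotonicity of $x\mapsto x^{2n+1}$, whereas the paper never sums at this stage --- it keeps the two estimates separate, bounds the cross term $\lambda|m_{1,\Lambda}|^{2n+1}|m_{2,\Lambda}|$ as just another monomial of total degree $2n+2<2n+4$, and closes via the pair of polynomial inequalities $\bigl(\norm{m_{i,\Lambda}}_{2n+4}^{T,\Lambda}\bigr)^{2n+4}\le P_i$ with $\deg P_i=2n+3$. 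Your sign argument is the cleaner of the two (and is in fact exactly the device the paper deploys later, in Lemma \ref{Cauchy convergence lemma} and in the uniqueness proof), while the paper's version shows the coupling needs no sign information at all here because it is subcritical in degree; relatedly, you expand $V'$ in powers of $g_i$ while the paper expands the binomial $(m_{i,\Lambda}-\Lambda F_i)^{2n+1}$ and keeps $m_{i,\Lambda}$ as the working variable --- equivalent bookkeeping. Your explicit verification of the integrability hypotheses of Lemma \ref{1D: basic lemma} via the exponential-decay lemma is a point the paper leaves implicit, and is a welcome addition.
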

\begin{proof}By the Lemma \ref{1D: basic lemma} and let $T<T^*$, we obtain
\begin{equation} 
\begin{aligned}
0 &\leq \int_0^T \int_\mathbb{R}  (m_{1,\Lambda} - \Lambda F_1)^{2n+1}(\partial_t - \dfrac{1}{2}\partial_{xx})(m_{1,\Lambda} - \Lambda F_1) dxdt \\
&= -\int_0^T \int_{\mathbb{R}} (m_{1,\Lambda} -\Lambda F_1)^{2n+1}\Lambda \bigg(  m_{1,\Lambda}^3 - m_{1,\Lambda} + \lambda (m_{1,\Lambda} - m_{2,\Lambda}) \bigg) dxdt
\end{aligned}
\end{equation}
Then, by expanding $(m_{1,\Lambda}-\Lambda F_1)^{2n+1}$,
\begin{equation}
\begin{aligned}
0 \leq -  \int_0^T  \int_{\mathbb{R}} \Lambda(x)&  \Big[ \sum_{m=0}^{2n+1} \binom{2n+1}{m}m_{1,\Lambda}^{2n+1-m}(-\Lambda F_1)^m \Big] \\
&\times \Big[  m_{1,\Lambda}^3 -  m_{1,\Lambda}  + \lambda ( m_{1,\Lambda} - m_{2,\Lambda}) \Big] dxdt
\end{aligned}
\end{equation}
or 
\begin{equation}
\begin{aligned}
 &\int_0^T \int_{\mathbb{R}} m_{1,\Lambda}^{2n+4}\Lambda (x) dxdt \\
 & \leq \int_0^T \int_{\mathbb{R}} \Bigg[ \sum_{m=1}^{2n+1} \binom{2n+1}{m} |m_{1,\Lambda}|^{2n+4-m} |\Lambda F_1|^{m} + |1-\lambda|m_{1,\Lambda}^{2n+2}\\
&+ \lambda |m_{1,\Lambda}|^{2n+1}|m_{2,\Lambda}| \\
&+ \sum_{m=1}^{2n+1} + |1-\lambda|\binom{2n+1}{m}|m_{1,\Lambda}|^{2n+2-m} |\Lambda F_1|^m \\
&+ \lambda \sum_{m=1}^{2n+1} \binom{2n+1}{m}|m_{1,\Lambda}|^{2n+1-m} |m_{2,\Lambda}| |\Lambda F_1|^m \Bigg]\Lambda(x)dxdt
\end{aligned}
\end{equation}
Using Young's inequality,
\begin{equation}
\begin{aligned}
&\dfrac{1}{4}\bigg( \norm{m_{1,\Lambda}}_{2n+4}^{T,\Lambda}\bigg)^{2n+4} \\
&\leq \sum_{m=1}^{2n+1} \binom{2n+1}{m} \bigg(\norm{\Lambda F_1}_{2n+4}^{T,\Lambda}\bigg)^m \bigg(\norm{m_{1,\Lambda}}_{2n+4}^{T,\Lambda}\bigg)^{2n+4-m}\\
&+  |1-\lambda|\bigg(\norm{1}_{2n+4}^{T,\Lambda} \bigg)^2 \bigg( \norm{m_{1,\Lambda}}_{2n+4}^{T,\Lambda} \bigg)^{2n+2}\\
 &+ \lambda  \bigg(\norm{1}_{2n+4}^{T,\Lambda} \bigg) \bigg( \norm{m_{1,\Lambda}}_{2n+4}^{T,\Lambda} \bigg)^{2n+1} \bigg( \norm{m_{2,\Lambda}}_{2n+4}^{T,\Lambda} \bigg) \\
&+ |1-\lambda|\sum_{m=1}^{2n+1}\binom{2n+1}{m} \bigg( \norm{m_{1,\Lambda}}_{2n+4}^{T,\Lambda}\bigg)^{2n+2-m} \bigg( \norm{\Lambda F_1}_{p(m,n)}^{T,\Lambda}\bigg)^m\\
&+ \lambda \sum_{m=1}^{2n+1} \binom{2n+1}{m} \bigg( \norm{m_{1,\Lambda}}_{2n+4}^{T,\Lambda} \bigg)^{2n+1-m} \bigg( \norm{m_{2,\Lambda}}_{2n+4}^{T,\Lambda} \bigg) \bigg( \norm{\Lambda F_1}_{p(m,n)}^{T,\Lambda}\bigg)^m
\end{aligned}
\end{equation}
here $p(m,n):= \dfrac{(2n+4)m}{2+m}$. Thus,
\begin{equation}
 \bigg( \norm{m_{1,\Lambda}}_{2n+4}^{T,\Lambda} \bigg)^{2n+4} \leq P_1 \bigg(\norm{m_{1,\Lambda}}_{2n+4}^{T,\Lambda},\norm{m_{2,\Lambda}}_{2n+4}^{T,\Lambda} \bigg)
\end{equation}
In a similar way to $m_{2,\Lambda}$, we get
\begin{equation}
\bigg( \norm{m_{2,\Lambda}}_{2n+4}^{T,\Lambda} \bigg)^{2n+4} \leq P_2 \bigg(\norm{m_{1,\Lambda}}_{2n+4}^{T,\Lambda},\norm{m_{2,\Lambda}}_{2n+4}^{T,\Lambda} \bigg)
\end{equation}
where $P_1(x,y)$ and $P_2(x,y)$ is a polynomial of $x,y$ with total degree $2n+3$. 
By letting $T \rightarrow T^*$ and note that coefficients of the polynomial $P_1,P_2$ are all bounded as $T \rightarrow T^*$. Thus there exists $C$  such that
\begin{equation}
\norm{m_{i,\Lambda}}_{2n+4}^{T^*,\Lambda} \leq C \quad \forall i = 1,2
\end{equation}
\end{proof}
\begin{lemma}
The kernel $H_t(x,y) \in L^{p}(\mathbb{R}\times [0,T])$ for any $T<\infty$ and $p < 3$.
\begin{proof}
We have
\begin{equation}
\begin{aligned}
\int_0^T \int_{\mathbb{R}} H_t^p(x,y)dyds &= \int_0^T \int_{\mathbb{R}} \dfrac{e^{\dfrac{-p(x-y)^2}{2t}}}{\sqrt{2 \pi} t^{p/2}}dydt 
\end{aligned}
\end{equation}
and by changing variables, set $z:= \dfrac{\sqrt{p}(y-x)}{\sqrt{2t}}\Rightarrow dz = \dfrac{\sqrt{p}}{\sqrt{2t}}dy $. Thus
\begin{equation}
\begin{aligned}
\int_0^T \int_{\mathbb{R}} \dfrac{e^{\dfrac{-p(x-y)^2}{2t}}}{\sqrt{2 \pi} t^{p/2}}dydt &= \int_0^T \int_{\mathbb{R}} \dfrac{e^{-z^2}}{t^{p/2-1/2}}dzdt \\
&= \int_0^T \dfrac{1}{t^{p/2-1/2}}\int_{\mathbb{R}}e^{-z^2}dzdt \\
&= C \int_0^T \dfrac{1}{t^{p/2-1/2}}dt= C't^{3/2-p/2}\bigg|_{t=0}^{t=T} < \infty \Leftrightarrow p < 3.
\end{aligned}
\end{equation}
\end{proof}
\end{lemma}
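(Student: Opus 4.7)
The plan is to reduce the claim to an elementary time integral by exploiting the Gaussian form and the translation invariance of the heat kernel $H_t(x,y) = (2\pi t)^{-1/2}\exp\bigl(-(x-y)^2/(2t)\bigr)$. First I would raise $H_t$ to the $p$-th power to obtain
\[
H_t(x,y)^{p} = \frac{1}{(2\pi t)^{p/2}}\,\exp\!\Big(-\frac{p(x-y)^{2}}{2t}\Big),
\]
and perform the spatial integration first. Since the integrand depends on $y$ only through $x-y$, one sees immediately that $\int_{\mathbb{R}} H_t^{p}(x,y)\,dy$ is independent of $x$, so the claim really is a statement about integrability in $t$ alone.

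Next I would carry out a standard Gaussian substitution $z = \sqrt{p/(2t)}\,(y-x)$, which converts the $y$-integral to a constant multiple of $\int_{\mathbb{R}} e^{-z^{2}}\,dz = \sqrt{\pi}$, and a careful bookkeeping of the Jacobian $dy = \sqrt{2t/p}\,dz$ produces a factor $t^{1/2}$. Combining this with the prefactor $t^{-p/2}$ gives $\int_{\mathbb{R}} H_t^{p}(x,y)\,dy = C_p\, t^{-(p-1)/2}$ for an explicit constant $C_p$ depending only on $p$.

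Finally I would integrate in time and observe that $\int_0^{T} t^{-(p-1)/2}\,dt$ is a standard convergent improper integral at the origin precisely when $(p-1)/2 < 1$, equivalently $p < 3$, in which case it equals $\tfrac{2}{3-p}\,T^{(3-p)/2}$. This finiteness is the only condition, and since $T < \infty$ there is no issue at the upper endpoint. The ``hard part'' here is essentially absent: the only subtlety is making sure to do the $y$-integration before the $t$-integration so that the singularity of $H_t$ as $t \downarrow 0$ shows up as an integrable power of $t$ rather than as a pointwise blow-up. The threshold $p = 3$ is dictated exactly by this $y$-integration producing a $t^{1/2}$ improvement over the $t^{-p/2}$ prefactor, so no sharper argument is possible.
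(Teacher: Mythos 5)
Your proposal is correct and follows essentially the same route as the paper's own proof: the Gaussian substitution $z=\sqrt{p/(2t)}\,(y-x)$ reduces the spatial integral to a constant times $t^{1/2}$, leaving the time integral $\int_0^T t^{-(p-1)/2}\,dt$, which converges precisely when $p<3$. No substantive difference; if anything your bookkeeping of the normalising constant $(2\pi t)^{-p/2}$ is slightly more careful than the paper's, which drops a power of $2\pi$ (harmless for the convergence claim).
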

\begin{lemma}\label{2D finite}
 Let $(m_{1,\Lambda},m_{2,\Lambda})$ be a continuous solution to \eqref{1D: FV approx}
on $\mathbb{R}\times [0,T^*)$, then it is bounded.
\end{lemma}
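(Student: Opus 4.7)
The plan is to bound the right-hand side of \eqref{1D: FV approx} uniformly in $(x,t) \in \mathbb{R}\times[0,T^*)$. Writing
\begin{equation*}
m_{i,\Lambda,t}(x) = -\int_0^t\int_{\mathbb{R}} H_{t-s}(x,y)\Lambda(y) g_{i,\Lambda}(y,s) \,dyds + \Lambda(x)F_{i,t}(x),
\end{equation*}
where $g_{i,\Lambda} := V'(m_{i,\Lambda}) + \lambda(m_{i,\Lambda} - m_{j,\Lambda})$, it suffices to bound each summand separately. The boundary term $\Lambda F_i$ is bounded on $\mathbb{R}\times[0,T^*]$ because $\Lambda$ is continuous with compact support and the map $(x,t) \mapsto F_{i,t}(x) = H_t m_{i,0}(x) + Z_{i,t}(x)$ admits a continuous version (heat extension of a continuous initial datum plus the stochastic convolution against space-time white noise, whose continuity on compact sets is standard in this setting).

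For the nonlinear convolution I would apply H\"older's inequality with conjugate exponents $p, q$ chosen so that $p < 3$ and $q > 3/2$; concretely, $p = q = 2$ works. This yields
\begin{equation*}
\bigg|\int_0^t \int_{\mathbb{R}} H_{t-s}(x,y)\Lambda(y) g_{i,\Lambda}(y,s) \,dyds \bigg| \leq \bigg(\int_0^t \int_{\mathbb{R}} H_{t-s}(x,y)^p \,dyds\bigg)^{1/p} \bigg(\int_0^t \int_{\mathbb{R}} \Lambda(y)^q |g_{i,\Lambda}(y,s)|^q \,dyds\bigg)^{1/q}.
\end{equation*}
By translation invariance of the heat kernel in $x$ and the substitution $\tau = t - s$, the first factor is dominated by $\bigl(\int_0^{T^*} \int_{\mathbb{R}} H_\tau(0,z)^p \,dzd\tau\bigr)^{1/p}$, which is finite and uniformly bounded in $(x,t)\in\mathbb{R}\times[0,T^*]$ by the preceding lemma on $L^p$ integrability of $H$ for $p<3$. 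For the second factor, since $0 \leq \Lambda \leq 1$ one has $\Lambda^q \leq \Lambda$ for $q \geq 1$, so the integral is dominated by $\int_0^{T^*} \int_{\mathbb{R}} \Lambda(y) |g_{i,\Lambda}(y,s)|^q \,dyds$. Expanding $|g_{i,\Lambda}|^q$ gives a finite sum of terms controlled by $|m_{i,\Lambda}|^{3q}$, $|m_{i,\Lambda}|^q$ and $|m_{j,\Lambda}|^q$, each of which is integrable against $\Lambda(y)\,dyds$ on $\mathbb{R}\times[0,T^*)$ by Lemma \ref{bounded in Lp, finite} applied to the exponents $3q$ and $q$. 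Combining the two estimates produces a constant $M$, independent of $(x,t) \in \mathbb{R}\times[0,T^*)$, with $|m_{i,\Lambda,t}(x)| \leq M$.

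I do not anticipate a serious obstacle here. The delicate point is the choice of the H\"older pair: $p$ must satisfy $p < 3$ so that the heat kernel factor is finite and translation invariance makes it $x$-independent, while the conjugate $q$ must be finite so that Lemma \ref{bounded in Lp, finite} applies to the polynomial nonlinearity $g_{i,\Lambda}$. The elementary inequality $\Lambda^q \leq \Lambda$ is what allows the weight $\Lambda$ required by that lemma to be reinstated after H\"older. Once this dovetailing is set up, everything reduces to applying the two previous lemmas.
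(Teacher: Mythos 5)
Your proposal is correct and follows essentially the same route as the paper: H\"older with the pair $p=q=2$, finiteness of $\|H\|_{L^2}$ on $\mathbb{R}\times[0,T^*]$ from the heat-kernel lemma ($2<3$), the inequality $\Lambda^2\leq\Lambda$ to reinstate the weight, and Lemma \ref{bounded in Lp, finite} to control the polynomial nonlinearity. The only difference is that you spell out the translation-invariance and exponent bookkeeping that the paper leaves implicit.
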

\begin{proof}  Using Holder's inequality, we get
\begin{equation}
\begin{aligned}
\norm{\chi_{[0,T^*]}m_{1,\Lambda}}_{L^{\infty}} \leq \norm{\chi_{[0,T^*]} H}_{2} &\norm{\chi_{[0,T^*]} \Lambda \bigg( V'(m_{1,\Lambda}) + \lambda (m_{1,\Lambda} - m_{2,\Lambda}) \bigg)}_{2}  \\
&+ \norm{\chi_{[0,T^*]} \Lambda F_1}_{\infty}
\end{aligned}
\end{equation}
Moreover, by $\Lambda \leq 1$, we get
\begin{equation}
\begin{aligned}
& \norm{\chi_{[0,T^*]} \Lambda \bigg(  
V'(m_{1,\Lambda}) +  \lambda (m_{1,\Lambda} - m_{2,\Lambda})
 \bigg) }_{2}  \\
 &\leq \norm{V'(m_{1,\Lambda}) + \lambda (m_{1,\Lambda} - m_{2,\Lambda})}_{2}^{T^*,\Lambda} < \infty \quad (\text{by Lemma \;} \ref{bounded in Lp, finite})
\end{aligned}
\end{equation}
and implies the result.
\end{proof}
\begin{lemma}
There is a continuous solution to  \eqref{1D: FV approx}
on $\mathbb{R} \times [0,\infty).$
\end{lemma}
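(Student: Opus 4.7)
The plan is to invoke a standard maximal-existence/continuation argument, using Lemma \ref{2D finite} as the key \emph{a priori} bound.

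Let $T^{*}$ be the supremum of those $T>0$ for which \eqref{1D: FV approx} admits a continuous solution on $\mathbb{R}\times[0,T]$. By the local existence proposition $T^{*}>0$, and I want to show $T^{*}=\infty$. Assume for contradiction that $T^{*}<\infty$. By Lemma \ref{2D finite}, any continuous solution on $[0,T^{*})$ is uniformly bounded, say $\sup_{x,t}|m_{i,\Lambda,t}(x)|\leq M$ for $i=1,2$, so the integrand $\Lambda[V'(m_{i,\Lambda,s})+\lambda(m_{i,\Lambda,s}-m_{j,\Lambda,s})]$ has a uniform bound $G=G(M,\lambda)$.

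Next, I would show that $m_{i,\Lambda,t}$ extends continuously up to $t=T^{*}$. Writing $n_{i,\Lambda,t}:=m_{i,\Lambda,t}-\Lambda F_{i,t}=-\int_{0}^{t}H_{t-s}\Lambda[\ldots]ds$, an estimate of the difference $n_{i,\Lambda,t'}-n_{i,\Lambda,t}$ splits into a term of order $G(t'-t)$ from the integral over $[t,t']$ and a term from the integral over $[0,t]$ whose integrand tends to $0$ uniformly as $t'\to t$, thanks to the compact support of $\Lambda$ and the uniform continuity of the heat kernel away from the diagonal. Since $F_{i,t}$ is itself continuous in $t$, we obtain a limit $m_{i,\Lambda,T^{*}}\in C_{b}(\mathbb{R})$.

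Using the semigroup identity $H_{t-s}=H_{t-T^{*}}H_{T^{*}-s}$ for $s<T^{*}<t$, the equation rewrites, for $t\geq T^{*}$, as
\begin{equation*}
m_{i,\Lambda,t}=\widetilde{F}_{i,t}-\int_{T^{*}}^{t}H_{t-s}\Lambda\bigl[V'(m_{i,\Lambda,s})+\lambda(m_{i,\Lambda,s}-m_{j,\Lambda,s})\bigr]\,ds,
\end{equation*}
with new forcing $\widetilde{F}_{i,t}:=H_{t-T^{*}}n_{i,\Lambda,T^{*}}+\Lambda F_{i,t}$, which is bounded on $\mathbb{R}\times[T^{*},T^{*}+1]$. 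This is an equation of the form \eqref{1D: FV approx} with time origin shifted to $T^{*}$, so the local existence proposition yields a continuous solution on $[T^{*},T^{*}+\delta]$ for some $\delta>0$ depending only on $\|\widetilde{F}_{i}\|_{\infty}$ and $\lambda$. Gluing this to the original one produces a continuous solution on $[0,T^{*}+\delta]$, contradicting the definition of $T^{*}$; hence $T^{*}=\infty$.

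The main obstacle will be the continuous extension step: justifying that $n_{i,\Lambda,t}$ admits a uniform-in-$x$ limit as $t\uparrow T^{*}$, which is where the compact support of $\Lambda$ enters crucially to tame the heat-kernel convolution at spatial infinity. Once this is in hand, the restart and gluing are routine applications of the already-proved local theory.
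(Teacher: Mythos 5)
Your proof is correct and takes essentially the same route as the paper, whose entire proof of this lemma is the sentence ``This follows from Lemma \ref{2D finite}'', resting on the earlier remark that a uniform bound on $[0,T^{*})$ suffices for global existence. You have simply spelled out the standard continuation argument (a priori bound, continuous extension to $t=T^{*}$, restart via the local existence proposition, and gluing) that the paper leaves implicit.
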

\begin{proof}
This follows from Lemma \ref{2D finite}.
\end{proof}
\section{Infinite Volume Equations}
This section aims to show global existence and uniqueness  \eqref{1D: original SPDE} by letting $\Lambda(x) \rightarrow 1$ in \eqref{1D: FV approx}.
We now introduce several measures on unbounded domain $\mathbb{R} \times \mathbb{R}^+$
\begin{equation}
\begin{aligned}
\mu (dt,dx) &= e^{-\alpha^2 t/2  - \alpha |x|} dxdt \\
\mu_T (dt,dx)  &= \chi_{[0,T]}\mu (dt,dx) \\
\mu_\Lambda (dt,dx) &= \Lambda (x) \mu (dt,dx) \\
\mu_{T,\Lambda}(dt,dx) &= \chi_{[0,T]}\Lambda \mu (dt,dx)  
\end{aligned}
\end{equation}
Similar to Lemma \ref{1D: basic lemma}, we have analogous version for measure $\mu_{T,\Lambda}$ as shown in Lemma 7, \cite{Doering1987}.
\begin{lemma}\label{1D: basic lemma 2}
If $f(x,t)$ and $(\partial_t - \dfrac{1}{2}\Delta)f$ are continuous on $\mathbb{R}\times [0,T]$ and $f(x,0)=0$, $|f|^{2n+2}$ and $|\nabla f|^{2n+2} \in L^1(\mathbb{R} \times [0,T],d\mu )$, then
\begin{equation}\label{B.}
\int_{\mathbb{R^+}}\int_{\mathbb{R}} f^{2n+1}(x,t)(\partial_t - \dfrac{1}{2}\Delta)f(x,t) d \mu_{T,\Lambda}(x,t) \geq 0
\end{equation}
\end{lemma}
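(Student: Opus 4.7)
The plan is to mirror the proof of Lemma 7 in \cite{Doering1987} and exploit an algebraic cancellation between the $t$- and $x$-derivatives of the exponential weight $\phi(x,t) := e^{-\alpha^2 t/2 - \alpha|x|}$. I start from the pointwise identity
\[
f^{2n+1}(\partial_t - \tfrac{1}{2}\partial_{xx})f = \tfrac{1}{2n+2}\,\partial_t(f^{2n+2}) - \tfrac{1}{2}f^{2n+1}\partial_{xx}f
\]
and integrate against $d\mu_{T,\Lambda}(x,t) = \chi_{[0,T]}(t)\Lambda(x)\phi(x,t)\,dxdt$. The integrability hypotheses on $|f|^{2n+2}$ and $|\partial_x f|^{2n+2}$ with respect to $d\mu$, together with the exponential decay of $\phi$, will be what justifies every integration by parts and lets me discard the boundary contributions at $|x|\to\infty$.

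For the time term, one integration by parts in $t$, using $f(\cdot,0)=0$ and $\partial_t\phi = -\tfrac{\alpha^2}{2}\phi$, yields the non-negative boundary term $\tfrac{1}{2n+2}\int_{\mathbb{R}}\Lambda(x)f^{2n+2}(x,T)\phi(x,T)\,dx$ and a bulk piece $+\tfrac{\alpha^2}{2(2n+2)}\int f^{2n+2}\,d\mu_{T,\Lambda}$. For the space term, one integration by parts in $x$ produces the manifestly non-negative Dirichlet piece $\tfrac{2n+1}{2}\int f^{2n}(\partial_x f)^2\,d\mu_{T,\Lambda}$ plus a transverse remainder $\tfrac{1}{2}\int f^{2n+1}(\partial_x f)\,\partial_x(\Lambda\phi)\,dxdt$; I then rewrite $f^{2n+1}\partial_x f = \tfrac{1}{2n+2}\partial_x(f^{2n+2})$ and integrate by parts once more to obtain $-\tfrac{1}{2(2n+2)}\int f^{2n+2}\partial_{xx}(\Lambda\phi)\,dxdt$. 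Using the distributional identity $\partial_{xx}e^{-\alpha|x|} = \alpha^2 e^{-\alpha|x|} - 2\alpha\,\delta_0(x)$, the $\alpha^2\Lambda\phi$ contribution of $\partial_{xx}(\Lambda\phi)$ cancels exactly the $+\alpha^2$ bulk term coming from the time step, while the Dirac piece produces the non-negative boundary $\tfrac{\alpha\Lambda(0)}{2n+2}\int_0^T f^{2n+2}(0,t)e^{-\alpha^2 t/2}\,dt$ at $x=0$, leaving a sum of non-negative quantities together with residual terms involving $\Lambda'$ and $\Lambda''$.

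The main obstacle is therefore twofold. First, one must make the distributional identity for $\partial_{xx}e^{-\alpha|x|}$ and all the integrations by parts rigorous: I would mollify $|x|\mapsto\sqrt{x^2+\varepsilon}$, carry out the calculation for the smoothed weight, truncate the spatial integral to $[-R,R]$, and then pass to the limits $\varepsilon\to 0^+$ and $R\to\infty$, using the $L^1(d\mu)$ hypotheses to kill boundary terms. Second, the unsigned corrections coming from the spatial derivatives of $\Lambda$ in $\partial_{xx}(\Lambda\phi) = \Lambda''\phi - 2\alpha\Lambda'\mathrm{sgn}(x)\phi + \alpha^2\Lambda\phi - 2\alpha\Lambda(0)\delta_0(x)\phi$ are individually not of definite sign; they are compactly supported and controlled by the smoothness of $\Lambda$, and they will be absorbed in the subsequent infinite-volume argument, where this lemma is used in tandem with the limit $\Lambda\to 1$ that makes the $\Lambda'$, $\Lambda''$ contributions vanish.
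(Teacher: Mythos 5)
The paper gives no proof of this lemma at all --- it simply defers to Lemma~7 of \cite{Doering1987} --- so your attempt can only be measured against the mechanism that result actually uses, and your decomposition is that mechanism: split off $\tfrac{1}{2n+2}\partial_t(f^{2n+2})$, integrate by parts in $x$ to produce the Dirichlet term $\tfrac{2n+1}{2}\int f^{2n}(\partial_x f)^2\,d\mu_{T,\Lambda}$, push the remaining derivatives onto the weight, and use that for $\phi(x,t)=e^{-\alpha^2 t/2-\alpha|x|}$ the $\alpha^2$ contributions from $\partial_t\phi$ and $\partial_{xx}\phi$ cancel exactly while the Dirac mass at $x=0$ enters with the favourable sign. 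Up to the point where derivatives land on $\Lambda$, your computation is correct, and your mollification/truncation plan for justifying the integrations by parts is fine and is not where the difficulty lies.

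The difficulty is the residue you flag, and it cannot be deferred. After all cancellations the proof needs $(\partial_t+\tfrac{1}{2}\partial_{xx})(\Lambda\phi)\le 0$ in the distributional sense, which away from $x=0$ reads $\tfrac{1}{2}\Lambda''(x)\le \alpha\,\Lambda'(x)\,\mathrm{sgn}(x)$; for $x>0$ this says $\Lambda'(x)e^{-2\alpha x}$ is nonincreasing, and since $\Lambda\ge 0$ vanishes at the right edge of its support this forces $\Lambda'\ge 0$ and hence $\Lambda\equiv 0$ on $(0,\infty)$. So no nontrivial nonnegative compactly supported cutoff makes the residue $-\tfrac{1}{2(2n+2)}\int f^{2n+2}\bigl(\Lambda''-2\alpha\Lambda'\,\mathrm{sgn}(x)\bigr)\phi\,dx\,dt$ signed, and near the edges of $\mathrm{supp}(\Lambda)$, where $\Lambda''>0$ and $-\Lambda'\,\mathrm{sgn}(x)>0$ while $\Lambda$ itself is small, the negative contribution is not dominated by the surviving positive terms: the inequality as stated is not established and there is no reason to believe it holds for a generic bump. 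Nor can the bad terms be ``absorbed in the infinite-volume limit'': the lemma is invoked at \emph{fixed} $\Lambda$ in Lemma~\ref{bounded in Lp, finite 2}, where no limit is taken, and controlling $\int f^{2n+2}|\Lambda''|\phi$ would require exactly the a priori $L^{2n+2}$ bound the lemma is being used to produce --- the deferral is circular. (Note also that the paper assumes $\Lambda$ merely continuous, so $\Lambda'$ and $\Lambda''$ need not exist.) What is true, and what Doering's Lemma~7 actually asserts, is the inequality for $d\mu_T$, i.e.\ with $\Lambda$ replaced by $1$, where your cancellation closes the argument completely; that is also the form effectively used in Lemma~\ref{Cauchy convergence lemma} and in the uniqueness proof. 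To keep the $\Lambda$-weighted statement one must either impose $(\partial_t+\tfrac{1}{2}\partial_{xx})(\Lambda\phi)\le 0$ as a hypothesis or carry the $\Lambda'$, $\Lambda''$ terms explicitly as an error on the right-hand side, which changes the statement.
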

Let us recall that the operator $f \rightarrow Hf$ is a bounded map from $L^p(\mu) \rightarrow L^p(\mu)$ for any $p>1$. This fact was proved in Lemma 9,\cite{Doering1987}.  
\begin{lemma}\label{H Lp to Lp}
The operator $H$ with heat kernel $H_{t}(x,y)$ is a bounded map from $L^p(d\mu) \rightarrow L^p(\mu)$ for any $p>1$.
\begin{proof}
Let $g \in L^p(\mu)$ and set $f:= Hg$. We prove that $f$ is also in $L^p(\mu)$ and $\norm{H} < \infty$. We have
\begin{equation}
\begin{aligned}
f(x,t) &= \int_0^\infty \int_\mathbb{R} \chi_{[0,\infty]}(t-s)H_{t-s}(x,y) \chi_{[0,\infty]}(s)g(y,s)dyds
\end{aligned}
\end{equation}
Multiplying both sides $e^{-\alpha|x|/p - \alpha^2t/2p}$ and note that $e^{-\alpha|x|} \leq e^{\alpha |x-y|} e^{-|y|}$, we derive
\begin{equation}
\begin{aligned}
&e^{-\alpha|x|/p - \alpha^2p.t/2}|f(x,t)|\\
&\leq \int_0^\infty \int_\mathbb{R}\bigg( \chi_{[0,\infty]}(t-s)H_{t-s}(x,y) e^{\alpha|x-y|/p - \alpha^2(t-s)/2p}\\
&\times \chi_{[0,\infty]}(s)g(y,s)e^{-\alpha|y|/p - \alpha^2.s/2p}\bigg)dyds
\end{aligned}
\end{equation}
The RHS is a convolution of $g$ and $H_t$ and it is suggested to apply Young's inequality
\begin{equation}
\norm{f}_{L^p(\mu)} \leq \norm{g}_{L^p(\mu)}\int_0^\infty \int_\mathbb{R} H_t(x,y)e^{\alpha|x-y|/p - \alpha^2 t/2p}dydt
\end{equation}
Moreover,
\begin{equation}
\begin{aligned}
\norm{H}_{L^p(\mu) \rightarrow L^p(\mu)} &\leq \int_0^\infty \int_\mathbb{R} H_t(x,y)e^{\alpha|x-y|/p - \alpha^2 t/2p}dydt \\
&\leq \int_0^\infty e^{-\alpha^2t/2p}\int_\mathbb{R} \dfrac{e^{-(x-y)^2/2t + \alpha|x-y|/p}}{\sqrt{2 \pi t}}dy dt\\
&\leq \int_0^\infty e^{-\alpha^2t/2p(1-1/p)}dt \int_\mathbb{R} e^{-(z-\dfrac{\sqrt{t}\alpha}{\sqrt{2}p})^2}dz < \infty \Leftrightarrow p > 1.
\end{aligned}
\end{equation}
\end{proof}
\end{lemma}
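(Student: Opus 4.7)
The plan is to reduce the boundedness of $H$ on $L^p(\mu)$ to a weighted convolution estimate and then apply Young's convolution inequality. First I would write
\[
Hg(x,t) \;=\; \int_0^t \int_\mathbb{R} H_{t-s}(x,y)\, g(y,s)\, dy\, ds,
\]
and multiply by the weight $e^{-\alpha|x|/p - \alpha^2 t/(2p)}$ so that the $L^p(\mu)$ norm on the left becomes an unweighted $L^p(dx\,dt)$ norm of the weighted function.

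Next, using the triangle inequality $|x| \leq |x-y| + |y|$ together with $t = (t-s) + s$, I would split the weight as
\[
e^{-\alpha|x|/p - \alpha^2 t/(2p)} \;\leq\; e^{\alpha|x-y|/p - \alpha^2(t-s)/(2p)} \cdot e^{-\alpha|y|/p - \alpha^2 s/(2p)}.
\]
The second factor is absorbed into $g$ (producing exactly the $L^p(\mu)$-weight for $g$), while the first factor modifies the heat kernel. After extending $g$ by zero for $s<0$, the resulting expression is a convolution on $\mathbb{R}\times\mathbb{R}$ of the weighted $|g|$ against the kernel $K(x,t) := H_t(0,x)\, e^{\alpha|x|/p - \alpha^2 t/(2p)}$ for $t\geq 0$, zero otherwise.

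Now Young's convolution inequality (with exponents $1$ and $p$) reduces the lemma to showing $K \in L^1(\mathbb{R}\times\mathbb{R}^+, dx\,dt)$. I would carry out the space integral by completing the square:
\[
-\frac{x^2}{2t} + \frac{\alpha|x|}{p} \;=\; -\frac{1}{2t}\Big(|x| - \frac{\alpha t}{p}\Big)^2 + \frac{\alpha^2 t}{2p^2},
\]
which yields $\int_\mathbb{R} H_t(0,x)\, e^{\alpha|x|/p}\, dx \leq C\, e^{\alpha^2 t/(2p^2)}$ uniformly in $t$. The remaining time integral is then $\int_0^\infty e^{\alpha^2 t/(2p^2) - \alpha^2 t/(2p)}\, dt$, which converges precisely when $1/p^2 < 1/p$, i.e.\ when $p > 1$.

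The main obstacle is the bookkeeping of weights: one must distribute the full weight $e^{-\alpha|x|/p - \alpha^2 t/(2p)}$ between a piece that can be absorbed into $g$ to match its $L^p(\mu)$ weight exactly, and a piece that modifies the heat kernel into a form whose space-time integral converges. The threshold $p > 1$ emerges naturally as the balance point between the Gaussian growth $e^{\alpha^2 t/(2p^2)}$ coming from completing the square and the decay $e^{-\alpha^2 t/(2p)}$ inherited from the measure $\mu$, and it is sharp within this approach.
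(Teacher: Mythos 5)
Your proposal is correct and follows essentially the same route as the paper: split the weight $e^{-\alpha|x|/p-\alpha^2 t/(2p)}$ across the kernel and $g$ via the triangle inequality, apply Young's convolution inequality with exponents $1$ and $p$, and reduce to the $L^1$ bound on the weighted kernel by completing the square, which gives convergence of the time integral exactly when $p>1$. No substantive differences to report.
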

The following important property was given in Lemma 12, \cite{Doering1987}
\begin{lemma}\label{H Lp to C}
The operator $H$ with heat kernel $H_{t}(x,y)$ is a bounded map from $L^p(d\mu) \rightarrow C^\alpha(\mathbb{R^+}\times \mathbb{R})$ for any $p>3/2$.
\begin{proof}
Let $g \in L^p(\mu)$ and set $f:= Hg$. We prove that $f$ is also in $L^p(\mu)$ and $\norm{H} < \infty$. We have
\begin{equation}
\begin{aligned}
f(x,t) &= \int_0^\infty \int_\mathbb{R} \chi_{[0,\infty]}(t-s)H_{t-s}(x,y) \chi_{[0,\infty]}(s)g(y,s)dyds
\end{aligned}
\end{equation}
Multiplying both sides $e^{-\alpha|x| - \alpha^2t/2}$ and note that $e^{-\alpha|x|} \leq e^{\alpha |x-y|} e^{-|y|}$, we derive
\begin{equation}
\begin{aligned}
&e^{-\alpha|x| - \alpha^2.t/2}|f(x,t)|\\
&\leq \int_0^\infty \int_\mathbb{R}\bigg( \chi_{[0,\infty]}(t-s)H_{t-s}(x,y) e^{\alpha|x-y| - \alpha^2(t-s)/2}\\
&\times \chi_{[0,\infty]}(s)g(y,s)e^{-\alpha|y| - \alpha^2.s/2}\bigg)dyds
\end{aligned}
\end{equation}
The RHS is a convolution of $g$ and $H_t$ and it is suggested to apply Young's inequality
\begin{equation}
\norm{f}_{C^\alpha(\mathbb{R}\times \mathbb{R^+})} \leq \norm{g}_{L^p(\mu)}\int_0^\infty \int_\mathbb{R} H_t^q(x,y)e^{q \alpha |x-y| - q\alpha^2 t/2}dydt
\end{equation}
Moreover,
\begin{equation}
\begin{aligned}
\norm{H}_{L^p(\mu) \rightarrow C^\alpha(\mathbb{R}\times \mathbb{R^+})} &\leq \int_0^\infty \int_\mathbb{R} H_t^q(x,y)e^{q\alpha|x-y| - q\alpha^2 t/2}dydt \\
&\leq \int_0^\infty \int_\mathbb{R} \dfrac{e^{-q(x-y)^2/2t + q\alpha|x-y| -q\alpha^2 t/2}}{\sqrt{2 \pi t}^q}dy dt\\
&\leq C \int_0^\infty t^{(1-q)/2}dt \int_\mathbb{R} e^{-\bigg( \dfrac{\sqrt{q}}{\sqrt{2t}}(y-x)-\dfrac{\alpha \sqrt{qt} }{\sqrt{2}}\bigg)^2}dy < \infty\\
&\Leftrightarrow q < 3 \Leftrightarrow p> 3/2. 
\end{aligned}
\end{equation}
\end{proof}
\end{lemma}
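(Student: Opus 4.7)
My plan is to mirror the argument used in Lemma~\ref{H Lp to Lp} but replace Young's convolution inequality with Hölder's inequality, since we now want a bound on the weighted supremum of $f:=Hg$ rather than its weighted $L^p$ norm. Concretely, for $g\in L^p(d\mu)$ I would begin from
\begin{equation*}
f(x,t)=\int_0^t\int_{\mathbb{R}} H_{t-s}(x,y)\,g(y,s)\,dy\,ds,
\end{equation*}
multiply both sides by the target weight $e^{-\alpha|x|-\alpha^{2}t/2}$, split $\alpha^{2}t/2=\alpha^{2}(t-s)/2+\alpha^{2}s/2$, and apply the elementary triangle bound $e^{-\alpha|x|}\le e^{\alpha|x-y|}e^{-\alpha|y|}$ to isolate the $L^p(\mu)$-density of $g$ on one side and a heat-kernel factor $H_{t-s}(x,y)\,e^{\alpha|x-y|-\alpha^{2}(t-s)/2}$ on the other.

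The next step is to apply Hölder's inequality with conjugate exponents $p$ and $q=p/(p-1)$, pairing $|g(y,s)|\bigl(e^{-\alpha|y|-\alpha^{2}s/2}\bigr)^{1/p}$ against the heat-kernel factor times the remaining weight to the power $-1/p$. The resulting extra copies of the weight $w=e^{-\alpha|y|-\alpha^{2}s/2}$, raised to $q/p$, introduce growth in $|y|$ and $s$; I would use $|y|\le|x|+|x-y|$ to pull out the $|x|$-dependence, which then combines with the prefactor $e^{-\alpha|x|}$ to leave only the target decay (by the identity $q(1-1/p)=1$). After this bookkeeping one obtains
\begin{equation*}
e^{-\alpha|x|-\alpha^{2}t/2}|f(x,t)|\;\le\;\|g\|_{L^{p}(\mu)}\,J^{1/q},
\end{equation*}
where $J=\int_{0}^{\infty}\!\int_{\mathbb{R}}H_{\tau}^{q}(z)\,e^{q\alpha|z|/p-c\,\alpha^{2}\tau}\,dz\,d\tau$ is translation-invariant in $x$ and independent of $t$, with $c>0$ a constant produced by the cancellation of the various $\tau$-exponentials.

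The remaining task is to verify $J<\infty$. Completing the square against the linear exponential $e^{q\alpha|z|/p}$ inside the Gaussian $H_{\tau}^q$ leaves a spatial Gaussian integral of order $\sqrt{\tau/q}$ and a surviving residual exponential $e^{-c'\alpha^{2}\tau}$ in time, for some $c'>0$. Consequently $J\lesssim\int_{0}^{\infty}\tau^{(1-q)/2}e^{-c'\alpha^{2}\tau}d\tau$, which is integrable at infinity for any $q$ and integrable at $\tau=0$ precisely when $(1-q)/2>-1$, i.e.\ when $q<3$, equivalently $p>3/2$. The principal obstacle is exactly this bookkeeping of exponents: one has to track several copies of the weight $w$ as they interact with Hölder's inequality and make sure all $x$- and $t$-dependence factors out so that the bound is genuinely uniform in $(x,t)$. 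Once this is done, the threshold $p>3/2$ arises transparently from the short-time singularity of $H_\tau^q$ in dimension one, and no condition at infinity in $\tau$ is needed thanks to the residual exponential decay.
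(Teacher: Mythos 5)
Your proof is correct and follows the same overall strategy as the paper's: an endpoint Young/H\"older bound on the space--time convolution, completion of the square in the Gaussian exponent, and identification of the threshold $q<3\Leftrightarrow p>3/2$ from the short-time singularity $\tau^{(1-q)/2}$ of $H_\tau^q$. The one genuine difference is in how the weight $w(y,s)=e^{-\alpha|y|-\alpha^2 s/2}$ is split between $g$ and the kernel, and here your version is the more careful one. The paper attaches the full weight $w$ to $g$, so the factor paired with the kernel is $H_{t-s}(x,y)e^{\alpha|x-y|-\alpha^2(t-s)/2}$; after raising to the power $q$, the exponent $-q(x-y)^2/2\tau+q\alpha|x-y|-q\alpha^2\tau/2$ completes to a perfect square with \emph{no} residual, so the paper's final time integral $\int_0^\infty \tau^{(1-q)/2}d\tau$ in fact diverges at $\tau=\infty$ (and the factor $\big(\int|g|^pw^p\,dy\,ds\big)^{1/p}$ it produces is not literally $\norm{g}_{L^p(\mu)}$). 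Your pairing of $|g|\,w^{1/p}$ against $H\,w^{-1/p}$ --- the same splitting the paper itself uses in Lemma \ref{H Lp to Lp} --- recovers $\norm{g}_{L^p(\mu)}$ exactly and, after completing the square, leaves the residual factor $e^{-c'\alpha^2\tau}$ with $c'=\tfrac{q}{2p}\left(1-\tfrac{1}{p}\right)>0$, so that $J\lesssim\int_0^\infty\tau^{(1-q)/2}e^{-c'\alpha^2\tau}\,d\tau$ converges at both ends; your bookkeeping via $|y|\le|x|+|x-y|$ and $q(1-1/p)=1$ likewise leaves only the harmless surplus decay $e^{-\alpha|x|/q}\le1$ and $e^{-\alpha^2 t/(2q)}\le1$, so the bound is genuinely uniform in $(x,t)$. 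In short: same route and same threshold, but your weight distribution closes a real gap in the paper's computation at large times.
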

\begin{lemma} \label{bounded in Lp, finite 2}
 Let $(m_{1,\Lambda},m_{2,\Lambda})$  be a continuous solution to \eqref{1D: FV approx}
on $\mathbb{R}\times \mathbb{R^+}$, then for each $\alpha >0$ and $p \geq 1$, there exists a constant $C=C(\alpha,p)$ independent of $\Lambda$  such that
\begin{equation}\label{1D: uniform bounded T star}
\norm{m_{i,\Lambda}}_{L^p(\mu) } \leq C \quad \forall i=1,2
\end{equation}
\end{lemma}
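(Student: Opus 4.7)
My plan is to mirror the Doering-style argument of Lemma \ref{bounded in Lp, finite} but with the weighted measure $\mu_{T,\Lambda}$ in place of $\chi_{[0,T]}\Lambda\,dx\,dt$, taking advantage of Lemma \ref{1D: basic lemma 2}, and then to upgrade the resulting weighted bound into a genuine $L^p(\mu)$ bound using the operator boundedness of Lemma \ref{H Lp to Lp}.

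The first step is to apply Lemma \ref{1D: basic lemma 2} with $f := m_{i,\Lambda} - \Lambda F_i$, which satisfies $f(x,0)=0$ by \eqref{1D: FV approx} and $(\partial_t - \tfrac12\Delta)f = -\Lambda\bigl[V'(m_{i,\Lambda}) + \lambda(m_{i,\Lambda}-m_{j,\Lambda})\bigr]$. This gives, for every $T<\infty$,
\[
0 \leq -\int (m_{i,\Lambda} - \Lambda F_i)^{2n+1}\,\Lambda^2\,\bigl[V'(m_{i,\Lambda}) + \lambda(m_{i,\Lambda} - m_{j,\Lambda})\bigr]\,\chi_{[0,T]}\,d\mu.
\]
Expanding the first factor by the binomial theorem, isolating the contribution $\int m_{i,\Lambda}^{2n+4}\,\Lambda^2\chi_{[0,T]}\,d\mu$ on the left, and estimating every other term through Young's inequality exactly as in the proof of Lemma \ref{bounded in Lp, finite}, I would obtain a coupled polynomial inequality
\[
Y_i^{2n+4} \leq P_i(Y_1,Y_2), \qquad Y_i := \|m_{i,\Lambda}\|_{L^{2n+4}(\Lambda^2\chi_{[0,T]}d\mu)},
\]
where each $P_i$ has total degree strictly less than $2n+4$ and coefficients controlled by norms of the form $\|\Lambda F_i\|_{L^q(\mu)} \leq \|F_i\|_{L^q(\mu)}$, the latter finite a.s.\ for every $q\geq 1$ (the part $H_t m_{i,0}$ lies in $C^\alpha$ by direct heat-kernel estimates, and the stochastic convolution $Z_{i,\cdot}$ has the required integrability by the standard estimates of Walsh or Da Prato--Zabczyk). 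The crucial point is that these coefficients are uniformly bounded in $\Lambda$ (since $0\le\Lambda\le 1$) and in $T$ (since the exponential weight in $\mu$ makes the whole measure finite). Solving the system yields
\[
\int m_{i,\Lambda}^{2n+4}\,\Lambda^2\,d\mu \leq C(\alpha,n),
\]
with $C$ independent of both $\Lambda$ and $T$; passing $T\to\infty$ is harmless.

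To remove the $\Lambda^2$ weight, I would use the integral equation
\[
m_{i,\Lambda} = -H\bigl[\Lambda\bigl(V'(m_{i,\Lambda}) + \lambda(m_{i,\Lambda} - m_{j,\Lambda})\bigr)\bigr] + \Lambda F_i
\]
together with Lemma \ref{H Lp to Lp}, which gives $\|H[\Lambda g]\|_{L^p(\mu)} \leq C_p\,\|\Lambda g\|_{L^p(\mu)}$ for every $p>1$. Because $0\leq\Lambda\leq 1$ and $p\geq 2$, $\Lambda^p \leq \Lambda^2$, so $\|\Lambda g\|_{L^p(\mu)}^p \leq \int \Lambda^2|g|^p\,d\mu$; combining this with the pointwise estimate $|V'(m)+\lambda(m-m')|^p \leq C(|m|^{3p} + |m'|^p + 1)$ and the previous step (with $n$ chosen so that $2n+4 \geq 3p$) yields $\|m_{i,\Lambda}\|_{L^p(\mu)} \leq C(\alpha,p)$ uniformly in $\Lambda$. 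The case $p\in[1,2)$ follows by Hölder's inequality from the case $p=2$, since $\mu$ is a finite measure on $\mathbb{R}\times\mathbb{R}^+$.

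The main obstacle is the bookkeeping of the first step: one must track the exponents $q=q(m,n)$ produced by each Young's inequality applied to terms of the binomial expansion, verifying that every resulting $Y_i$-contribution has degree strictly below $2n+4$ so that the polynomial inequality genuinely bounds the $Y_i$, and that no coefficient blows up as $T\to\infty$. This last point is what is new relative to Lemma \ref{bounded in Lp, finite}; it is precisely the exponential damping built into $\mu$ that supplies the required $T$-uniformity.
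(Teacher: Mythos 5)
Your proposal follows essentially the same route as the paper: apply Lemma \ref{1D: basic lemma 2} to $f = m_{i,\Lambda}-\Lambda F_i$, expand binomially, use Young's inequality to obtain coupled polynomial inequalities of degree $2n+3$ with coefficients bounded uniformly in $T$ and $\Lambda$ (thanks to the exponential weight in $\mu$ and $\|\Lambda F_i\|\le\|F_i\|$), and then remove the $\Lambda$-weight via the integral equation and the $L^p(\mu)\to L^p(\mu)$ boundedness of $H$ from Lemma \ref{H Lp to Lp}. The only differences are cosmetic (you carry the weight $\Lambda^2$ where the paper writes $\mu_{T,\Lambda}$, and you spell out the Hölder step for small $p$), so the argument is correct and matches the paper's proof.
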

\begin{proof}By the Lemma \ref{1D: basic lemma 2} and take arbitrarily  $T > 0$, we obtain
\begin{equation} 
\begin{aligned}
0 &\leq \int_{\mathbb{R}^+} \int_\mathbb{R}  (m_{1,\Lambda} - \Lambda F_1)^{2n+1}(\partial_t - \dfrac{1}{2}\partial_{xx})(m_{1,\Lambda} - \Lambda F_1) d \mu_{T,\Lambda}(x,t) \\
&= -\int_0^T \int_{\mathbb{R}} (m_{1,\Lambda} -\Lambda F_1)^{2n+1}\Lambda \bigg(  m_{1,\Lambda}^3 - m_{1,\Lambda} + \lambda (m_{1,\Lambda} - m_{2,\Lambda}) \bigg) d \mu(x,t)
\end{aligned}
\end{equation}
Then, by expanding $(m_{1,\Lambda}-\Lambda F_1)^{2n+1}$,
\begin{equation}
\begin{aligned}
0 \leq - & \int_0^T  \int_{\mathbb{R}} \Lambda(x) \Bigg[ \sum_{m=0}^{2n+1} \binom{2n+1}{m}m_{1,\Lambda}^{2n+1-m}(-\Lambda F_1)^m \Bigg] \\
&\times \Big[  m_{1,\Lambda}^3 -  m_{1,\Lambda}  + \lambda ( m_{1,\Lambda} - m_{2,\Lambda}) \Big] d\mu(x,t)
\end{aligned}
\end{equation}
Take the highest order term above to the left side to derive the estimate 
\begin{equation}
\begin{aligned}
 &\int_0^T \int_{\mathbb{R}} m_{1,\Lambda}^{2n+4}\Lambda (x) d \mu(x,t) \\
 &\leq \int_0^T \int_{\mathbb{R}} \Bigg[ \sum_{m=1}^{2n+1} \binom{2n+1}{m} |m_{1,\Lambda}|^{2n+4-m} |\Lambda F_1|^{m} \\
 &+ |1-\lambda|m_{1,\Lambda}^{2n+2} + \lambda |m_{1,\Lambda}|^{2n+1}|m_{2,\Lambda}| \\
 &+ \sum_{m=1}^{2n+1} + |1-\lambda|\binom{2n+1}{m}|m_{1,\Lambda}|^{2n+2-m} |\Lambda F_1|^m \\
 &+ \lambda \sum_{m=1}^{2n+1} \binom{2n+1}{m}|m_{1,\Lambda}|^{2n+1-m} |m_{2,\Lambda}| |\Lambda F_1|^m \Bigg]\Lambda(x)d \mu(x,t)
\end{aligned}
\end{equation}
and using Young's inequality,
\begin{equation}
\begin{aligned}
&\dfrac{1}{4}\bigg( \norm{m_{1,\Lambda}}_{L^{2n+4}(\mu_{T,\Lambda})}\bigg)^{2n+4} \\
&\leq \sum_{m=1}^{2n+1} \binom{2n+1}{m} \bigg(\norm{\Lambda F_1}_{L^{2n+4}(\mu_{T,\Lambda})}\bigg)^m \bigg(\norm{m_{1,\Lambda}}_{L^{2n+4}(\mu_{T,\Lambda})}\bigg)^{2n+4-m}\\
&+  |1-\lambda|\bigg(\norm{1}_{L^{2n+4}(\mu_{T,\Lambda})} \bigg)^2 \bigg( \norm{m_{1,\Lambda}}_{L^{2n+4}(\mu_{T,\Lambda})} \bigg)^{2n+2}\\
 &+ \lambda  \bigg(\norm{1}_{L^{2n+4}(\mu_{T,\Lambda})} \bigg) \bigg( \norm{m_{1,\Lambda}}_{L^{2n+4}(\mu_{T,\Lambda})} \bigg)^{2n+1} \bigg( \norm{m_{2,\Lambda}}_{L^{2n+4}(\mu_{T,\Lambda})} \bigg) \\
&+ |1-\lambda|\sum_{m=1}^{2n+1}\binom{2n+1}{m} \bigg( \norm{m_{1,\Lambda}}_{L^{2n+4}(\mu_{T,\Lambda})}\bigg)^{2n+2-m} \bigg( \norm{\Lambda F_1}_{L^{p(m,n)}(\mu_{T,\Lambda})}\bigg)^m \quad \\
&+ \lambda \sum_{m=1}^{2n+1} \binom{2n+1}{m} \bigg( \norm{m_{1,\Lambda}}_{L^{2n+4}(\mu_{T,\Lambda})} \bigg)^{2n+1-m}\times\\
&\times \bigg( \norm{m_{2,\Lambda}}_{L^{2n+4}(\mu_{T,\Lambda})} \bigg) \bigg( \norm{\Lambda F_1}_{L^{p(m,n)}(\mu_{T,\Lambda})}\bigg)^m
\end{aligned}
\end{equation}
here $p(m,n):= \dfrac{(2n+4)m}{2+m}$. Thus,
\begin{equation}
 \bigg( \norm{m_{1,\Lambda}}_{L^{2n+4}(\mu_{T,\Lambda})} \bigg)^{2n+4} \leq \overline{P}_1 \bigg(\norm{m_{1,\Lambda}}_{L^{2n+4}(\mu_{T,\Lambda})},\norm{m_{2,\Lambda}}_{L^{2n+4}(\mu_{T,\Lambda})} \bigg)
\end{equation}
where $P(x,y)$ is a polynomial of $x,y$ with total degree $2n+3$.

In a similar way to $m_{2,\Lambda}$, we get
\begin{equation}
\bigg( \norm{m_{2,\Lambda}}_{L^{2n+4}(\mu_{T,\Lambda})} \bigg)^{2n+4} \leq \overline{P}_2 \bigg(\norm{m_{2,\Lambda}}_{L^{2n+4}(\mu_{T,\Lambda})},\norm{m_{1,\Lambda}}_{L^{2n+4}(\mu_{T,\Lambda})} \bigg)
\end{equation}
  note that coefficients of the polynomial $\overline{P}_1,\overline{P}_2$ are all bounded as $T \rightarrow \infty$ and $\Lambda \rightarrow 1$
\begin{equation}
\norm{\Lambda F_i}_{L^p(\mu_{T,\Lambda})}  \leq \norm{F_i}_{L^p(\mu)} < \infty,\quad \norm{1}_{L^p(\mu_{T,\Lambda})} \leq \norm{1}_{L^p(\mu)} < \infty
\end{equation}
Thus there exists $C'$ independent of $T$ and $\Lambda
$ such that
\begin{equation}
\norm{m_{i,\Lambda}}_{L^{2n+4}(\mu_{T,\Lambda})} \leq C' \quad \forall i = 1,2
\end{equation}
So there is $C=C(\alpha,p) > 0$
\begin{equation}
\norm{m_{i,\Lambda}}_{L^p(\mu_{T,\Lambda})} \leq C \quad \forall i=1,2
\end{equation}
Since $\Lambda \leq 1$ and \eqref{1D: uniform bounded T star} and letting $T \rightarrow \infty$
\begin{equation} \label{1D: 2-bounded}
\norm{\Lambda m_{i,\Lambda}}_{L^p(\mu)} \leq \lim_{T \rightarrow \infty} \norm{m_{i,\Lambda}}_{L^p(\mu_{T,\Lambda})} < \infty
\end{equation}
Moreover, the heat operator $H$ is bounded from $L^p(\mu)$ to $L^p(\mu)$\; (Lemma \ref{H Lp to Lp}). Thus, from 
\begin{equation}
\norm{m_{i,\Lambda}}_{L^p(\mu)} \leq \norm{H} \norm{\Lambda \bigg(V'(m_{i,\Lambda}) + \lambda (m_{i,\Lambda} - m_{i+1,\Lambda}) \bigg)}_{L^p(\mu)} + \norm{ F_i}_{L^p(\mu)}
\end{equation}
here $\norm{H}=\norm{H}_{L^p(\mu) \rightarrow L^p(\mu)}$. This is combined with \eqref{1D: 2-bounded}, the lemma follows.
\end{proof}
\begin{lemma}\label{Cauchy convergence lemma}
 	For $\beta$ large enough and for all $i=1,2$,  $e^{-\beta t} \Lambda m_{i,\Lambda}$ and $e^{-\beta t}\Lambda m_{i,\Lambda}$ are Cauchy in $L^p(d\mu)$ as $\Lambda \rightarrow 1$.
\end{lemma}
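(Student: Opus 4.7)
The strategy is a weighted energy estimate in the spirit of Doering. I fix two cutoffs $\Lambda,\Lambda'\in[0,1]$ of compact support (which will be sent pointwise to $1$) and write $u_i := m_{i,\Lambda} - m_{i,\Lambda'}$. Since Lemma \ref{1D: basic lemma 2} requires a function vanishing at $t=0$, and since $m_{i,\Lambda}(\cdot,0) = \Lambda F_i(\cdot,0)$ from \eqref{1D: FV approx}, it is natural to subtract off the inhomogeneous initial piece by working with $\tilde u_i := u_i - (\Lambda-\Lambda')F_i$, which satisfies $\tilde u_i(\cdot,0)=0$ and
\begin{equation*}
(\partial_t - \tfrac{1}{2}\partial_{xx})\tilde u_i = -\Lambda\bigl[V'(m_{i,\Lambda}) - V'(m_{i,\Lambda'}) + \lambda(u_i - u_j)\bigr] - (\Lambda - \Lambda')\bigl[V'(m_{i,\Lambda'}) + \lambda(m_{i,\Lambda'} - m_{j,\Lambda'})\bigr],
\end{equation*}
with $j = 3-i$.

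The core of the argument is to apply Lemma \ref{1D: basic lemma 2} to $g_i := e^{-\beta t}\tilde u_i$ against the weight $\mu_{T,\Lambda}$, test with $g_i^{2n+1}$, and let $T\to\infty$ afterwards. The exponential prefactor produces a good $-\beta g_i$ contribution from $(\partial_t-\tfrac{1}{2}\partial_{xx})g_i$, while the cubic difference factorizes as $V'(a)-V'(b) = (a-b)(a^2+ab+b^2-1)$ and, together with the substitution $u_i = e^{\beta t}g_i + (\Lambda-\Lambda')F_i$, yields a principal term $\int \Lambda g_i^{2n+2}Q_i\,d\mu_{T,\Lambda}$ with $Q_i := m_{i,\Lambda}^2 + m_{i,\Lambda}m_{i,\Lambda'} + m_{i,\Lambda'}^2 - 1 \geq -1$, a coupling term $\lambda\sum_i\int \Lambda g_i^{2n+1}(g_i-g_j)\,d\mu_{T,\Lambda}$ tamed by Young's inequality after summing in $i$, and a remainder $\mathcal{E}(\Lambda,\Lambda')$ collecting every term proportional to $(\Lambda-\Lambda')$ times $F_i$ and polynomials in $m_{\cdot,\Lambda'}$. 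For $\beta$ larger than a threshold $\beta_0(\lambda)$ these contributions close up as
\begin{equation*}
\sum_{i=1}^2 \norm{g_i}_{L^{2n+2}(\mu_{T,\Lambda})}^{2n+2} \leq C_\beta \sum_{i=1}^2 \norm{(\Lambda-\Lambda')R_i}_{L^{2n+2}(\mu)}^{2n+2},
\end{equation*}
where each $R_i$ is a polynomial expression in $F_i, m_{i,\Lambda'}, m_{j,\Lambda'}$ whose $L^p(\mu)$ norm is bounded uniformly in $\Lambda'$ by Lemma \ref{bounded in Lp, finite 2}.

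Letting $T\to\infty$ and then $\Lambda,\Lambda'\to 1$ pointwise, dominated convergence forces the right-hand side to zero, so $g_i\to 0$ in $L^{2n+2}(\mu)$. Combining this with the vanishing of $e^{-\beta t}(\Lambda-\Lambda')F_i$ and of $e^{-\beta t}(\Lambda-\Lambda')m_{i,\Lambda'}$ in $L^p(\mu)$ (again by Lemma \ref{bounded in Lp, finite 2} and dominated convergence) proves that $e^{-\beta t}\Lambda m_{i,\Lambda}$ is Cauchy in $L^{2n+2}(\mu)$; the analogous bound for arbitrary $p\geq 1$ follows by choosing $n$ large enough and applying H\"older. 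The main obstacle is the quantitative choice of $\beta$: because $Q_i\geq -1$ is the only available lower bound on the cubic factor, the scheme is merely borderline coercive, and $\beta$ must dominate $1+\lambda$ together with the Young constants needed to absorb both the cross-coupling and the cubic polynomials hidden inside $\mathcal{E}(\Lambda,\Lambda')$. A subsidiary technicality is verifying the integrability hypotheses of Lemma \ref{1D: basic lemma 2} for $g_i$, which however reduces directly to Lemma \ref{bounded in Lp, finite 2}.
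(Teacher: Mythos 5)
Your proposal is correct and shares the overall architecture of the paper's proof: apply the weighted positivity lemma (Lemma \ref{1D: basic lemma 2}) to $e^{-\beta t}$ times the difference of two finite-volume solutions shifted by the forcing term, sum over $i=1,2$ so that the antisymmetric coupling produces the non-negative quantity $\lambda\int (g_1^{2n+1}-g_2^{2n+1})(g_1-g_2)\,d\mu$ (which can simply be dropped --- Young's inequality is not even needed there), and push every $(\Lambda-\Lambda')$-proportional term into a remainder controlled via the uniform bounds of Lemma \ref{bounded in Lp, finite 2}. Where you genuinely diverge is the treatment of the nonlinearity. The paper rewrites the difference of drifts in terms of $X_i=\Lambda' m_{i,\Lambda'}-\Lambda m_{i,\Lambda}$, i.e.\ moves the cutoffs inside $V'$ at the cost of extra remainder terms, and then uses the strict monotonicity $(V'(x)+\beta x-V'(y)-\beta y)(x-y)\ge k(x-y)^4$ to produce the coercive term $k\norm{e^{-\beta t}X_i}_{L^{2n+4}(\mu)}^{2n+4}$. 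You instead keep $V'(m_{i,\Lambda})-V'(m_{i,\Lambda'})=u_iQ_i$ and use only $Q_i\ge -1$, absorbing the resulting $+\norm{g_i}_{L^{2n+2}}^{2n+2}$ into the $-\beta\norm{g_i}_{L^{2n+2}}^{2n+2}$ generated by the exponential weight. Your route avoids the slightly delicate step of commuting $\Lambda$ with $V'$ (which generates terms such as $(\Lambda-\Lambda^3)m^3$ that the paper silently buries in $R_1$), at the price of extracting smallness from $\beta$ alone rather than from quartic coercivity; both close for $\beta$ large. One point to tighten: your remainders have the form $(\Lambda-\Lambda')h_{\Lambda'}$ with $h_{\Lambda'}$ itself depending on $\Lambda'$, so dominated convergence does not apply directly; you need one further H\"older step, $\norm{(\Lambda-\Lambda')h_{\Lambda'}}_{L^{2n+2}(\mu)}\le \norm{\Lambda-\Lambda'}_{L^{(2n+2)r}(\mu)}\norm{h_{\Lambda'}}_{L^{(2n+2)r'}(\mu)}$, sending the first factor to zero (dominated convergence against the finite measure $\mu$) while the second stays bounded uniformly in $\Lambda'$. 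The paper is no more explicit on this point, so this is a shared, and easily repaired, gap rather than a defect of your argument.
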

\begin{proof}
Rewrite \eqref{1D: FV approx} into PDE
\begin{equation}
(\partial_t - \dfrac{1}{2}\partial_{xx})(m_{1,\Lambda} - \Lambda F_1)  = -\Lambda \bigg(  V'(m_{1,\Lambda}) + \lambda (m_{1,\Lambda} - m_{2,\Lambda}) \bigg)
\end{equation}
it follows that for any $\beta > 0$
\begin{equation}
\begin{aligned}
&(\partial_t - \dfrac{1}{2}\partial_{xx})\bigg( e^{-\beta t} (m_{1,\Lambda}-\Lambda F_1)\bigg)\\
&= -e^{-\beta t} \Lambda \bigg( V'(m_{1,\Lambda}) + \beta m_{1,\Lambda}  + \lambda (m_{1,\Lambda} - m_{2,\Lambda})\bigg)- \beta e^{-\beta t}(1-\Lambda)m_{1,\Lambda}\\
&+ \beta e^{-\beta t}\Lambda F_1
\end{aligned}
\end{equation}
Thus
\begin{equation}
\begin{aligned}
&(\partial_t -\dfrac{1}{2}\partial_{xx})\bigg( e^{-\beta t}(m_{1,\Lambda'} - m_{1,\Lambda})-e^{\beta t}(\Lambda'-\Lambda)F_1 \bigg)\\
&= -e^{-\beta t}\bigg \lbrace\Lambda' \bigg(V'(m_{1,\Lambda'})+\beta m_{1,\Lambda}' + \lambda(m_{1,\Lambda'}-m_{2,\Lambda'})\bigg)\\
&- \Lambda \bigg(V'(m_{1,\Lambda})+\beta m_{1,\Lambda}+ \lambda(m_{1,\Lambda}-m_{2,\Lambda}\bigg) \bigg \rbrace\\ &-\beta e^{-\beta t}\bigg( (1-\Lambda')m_{1,\Lambda'}-(1-\Lambda)m_{1,\Lambda}\bigg)\\
&+\beta e^{-\beta t} (\Lambda' - \Lambda)F_1
\end{aligned}
\end{equation}
Using the Lemma \ref{1D: basic lemma 2}, we get
\begin{equation}\label{1 lemma}
\begin{aligned}
0 \leq &\int_{\mathbb{R^+}}\int_\mathbb{R} \bigg( e^{-\beta t} (m_{1,\Lambda}- m_{1,\Lambda'})- e^{-\beta t}(\Lambda' - \Lambda)F_1\bigg)^{2n+1} \times \\
& \bigg( \partial_t - \dfrac{1}{2}\Delta \bigg)\bigg( e^{-\beta t}(m_{1,\Lambda'}-m_{1,\Lambda'}) - e^{-\beta t} (\Lambda' - \Lambda)F_1\bigg) d\mu \\ 
&=- \int_{\mathbb{R^+}}\int_\mathbb{R}  e^{-(2n+2)\beta t} \bigg((m_{1,\Lambda'} -m_{1,\Lambda}) - (\Lambda' -\Lambda)F_1 \bigg)^{2n+1} \times \\
&\bigg[ \Lambda' \bigg( V'(m_{1,\Lambda'}) + \beta m_{1,\Lambda'}- \lambda (m_{2,\Lambda'}-m_{1,\Lambda'})\bigg) \\
& - \Lambda \bigg(V'(m_{1,\Lambda}) + \beta m_{1,\Lambda}- \lambda (m_{2,\Lambda}-m_{1,\Lambda}) \bigg) \bigg] d\mu \\
&= -\int_{\mathbb{R^+}}\int_\mathbb{R} e^{-(2n+2)\beta t} \bigg( \Lambda' m_{1,\Lambda'} - \Lambda m_{1,\Lambda}\bigg)^{2n+1} \times \\
&\bigg[ 
\bigg(V'(\Lambda' m_{1,\Lambda'}) + \beta \Lambda' m_{1,\Lambda'} -\lambda (\Lambda' m_{2,\Lambda'} - \Lambda' m_{1,\Lambda'}) \bigg) \\
&- \bigg(V'(\Lambda m_{1,\Lambda}) + \beta \Lambda m_{1,\Lambda} -\lambda (\Lambda m_{2,\Lambda} - \Lambda m_{1,\Lambda}) \bigg)   
\bigg] d\mu + R_1 \\ 
&= - \int_{\mathbb{R^+}}\int_\mathbb{R}  e^{-(2n+2) \beta t} (\Lambda' m_{1,\Lambda'} - \Lambda m_{1,\Lambda})^{2n+1}(A_1 + A_2)d\mu + R_1
\end{aligned}
\end{equation}
where
\begin{equation}
\begin{aligned}
A_1 = V'(\Lambda' m_{1,\Lambda'}) + \beta \Lambda' m_{1,\Lambda'} - V'(\Lambda m_{1,\Lambda}) - \beta \Lambda m_{1,\Lambda} \\
A_2 = -\lambda \bigg( \Lambda' m_{2,\Lambda'} - \Lambda m_{2,\Lambda} - (\Lambda' m_{1,\Lambda'} - \Lambda m_{1,\Lambda})   \bigg)
\end{aligned}
\end{equation}
and  $R_1$ consists of terms vanishing as $\Lambda,\Lambda' \rightarrow 1$. Setting

For $\beta$ large enough, $V'+\beta Id$ is a monopolynomial, so there exists $k>0$ such that
\begin{equation}
(V'(x) + \beta x - V'(y)-\beta y)(x-y) \geq k(x-y)^4 \quad \forall x,y \in \mathbb{R}
\end{equation}
Setting $X_1 = \Lambda' m_{1,\Lambda'} - \Lambda m_{1,\Lambda}$ and  $X_2 =  \Lambda' m_{2,\Lambda'} - \Lambda m_{2,\Lambda}$
\begin{equation}  \label{eqn: 2D: 1 term}
k\int_{\mathbb{R^+}}\int_\mathbb{R}  e^{-(2n+2)\beta t} X_1^{2n+4}d\mu + \lambda \int_{\mathbb{R^+}}\int_\mathbb{R}  e^{-(2n+2)\beta t}X_1^{2n+1}(X_1 -X_2) d\mu \leq R_1
\end{equation}
We also get similar expression for $X_2$
\begin{equation}  \label{eqn: 2D: 2 term}
k\int_{\mathbb{R^+}}\int_\mathbb{R}  e^{-(2n+2)\beta t} X_2^{2n+4}d\mu + \lambda \int_{\mathbb{R^+}}\int_\mathbb{R}  e^{-(2n+2)\beta t}X_2^{2n+1}(X_2 -X_1)d\mu \leq R_2
\end{equation}
Taking the sum of \eqref{eqn: 2D: 1 term} and \eqref{eqn: 2D: 2 term}
\begin{equation}
\begin{aligned}
&k \int_{\mathbb{R^+}}\int_\mathbb{R}  e^{-(2n+2)\beta t}(X_1^{2n+4} + X_2^{2n+4})d\mu \\
& +  \lambda \int_{\mathbb{R^+}}\int_\mathbb{R}  e^{-(2n+2)\beta t}(X_1^{2n+1}-X_2^{2n+1})(X_1-X_2)d\mu \leq R_1 + R_2
\end{aligned}
\end{equation}
It is easy to see that the second term of LHS is non-negative and can be neglected 
\begin{equation}
k \int_{\mathbb{R^+}}\int_\mathbb{R} e^{-(2n+2)\beta t}(X_1^{2n+4} + X_2^{2n+4})d\mu  \leq R_1 + R_2
\end{equation}
Since $e^{-(2n+2)\beta t} \geq e^{-(2n+4) \beta t}$, we derive
\begin{equation}
 \norm{e^{-\beta t} (\Lambda' m_{i,\Lambda'} - \Lambda m_{i,\Lambda})}_{L^{2n+4}(\mu)}^{2n+4} \leq \int_{\mathbb{R^+}}\int_\mathbb{R} e^{-(2n+2)\beta t}X_i^{2n+4}d\mu \leq \dfrac{R_1 + R_2}{k} \rightarrow 0
\end{equation}
as $\Lambda,\Lambda' \rightarrow 1$, which implies the statement.
\end{proof}
With help of above lemma, we are ready to establish global existence and uniqueness in space $C^{\alpha}(\mathbb{R} \times \mathbb{R}^+)$.

\subsubsection{Proof of Theorem \ref{main theorem model 1}}
\begin{proof}(Existence)
By Lemma \ref{Cauchy convergence lemma}, there is a $\beta > 0$ such that 
\begin{equation}
e^{-\beta t} \Lambda m_{i,\Lambda} \rightarrow e^{-\beta t} m_i \quad  \text{in $L^p(d \mu)$}
\end{equation}
but $e^{-\beta t}\Lambda m_{i,\Lambda} = e^{-\beta t}m_{i,\Lambda} + e^{-\beta t}(1-\Lambda) m_{i,\Lambda}$ and this follows
\begin{equation}\label{equation 1}
e^{-\beta t} m_{i,\Lambda} \rightarrow e^{-\beta t}m_{i} \quad \text{in $L^p(d \mu)$} \quad \text{as} \; \Lambda \rightarrow 1 
\end{equation} 
If $\beta < \alpha^2/2$, we choose $q,r \geq 1$ satisfies $\dfrac{1}{p} = \dfrac{1}{q} + \dfrac{1}{r}$, $1<q<\dfrac{\alpha^2}{2\beta}$ and applying Young's inequality
\begin{equation}
\begin{aligned}
\norm{m_{i,\Lambda}-m_i}_{L^p(\mu)} &= \norm{e^{\beta t } \bigg( e^{-\beta t}m_{i,\Lambda} - e^{-\beta t}m_{i,\Lambda} \bigg)}_{L^p(\mu)} \\
&\leq \norm{e^{\beta t}}_{L^q(\mu)} \norm{e^{-\beta t} m_{i,\Lambda} - e^{-\beta t}m_i}_{L^r(\mu)}
\end{aligned}
\end{equation}
Since $1 < q < \dfrac{\alpha^2}{2\beta}$, this implies $\quad \norm{e^{\beta t}}_{L^q(\mu)} < \infty$ and from \eqref{equation 1}, we obtain as $\Lambda \rightarrow 1$
\begin{equation}
m_{i,\Lambda} \rightarrow m_i \quad \text{in} \; L^p(\mu) \quad \forall i=1,2
\end{equation}  
Therefore, \eqref{1D: original SPDE} has a solution $(m_1,m_2)$ in $(L^p(\mu))^2$. Moreover, the operator $H$ is a bounded map from $L^{p}$ to $C^{\alpha}(\mathbb{R}\times \mathbb{R}^+)$ (Lemma \ref{H Lp to C}), \eqref{1D: original SPDE} has also a continuous solution in $\bigg( C^\alpha(\mathbb{R}\times \mathbb{R}^+) \bigg)^2$ with $\alpha^2/2 > \beta$.

To show \eqref{1D: original SPDE} has a solution in $\bigg( C^{\alpha}(\mathbb{R}\times \mathbb{R}^+)\bigg)^2$ for  every $\alpha > 0$. Using the Lemma \ref{bounded in Lp, finite 2}, there exists $(m_{1}',m_{2}')$ and a weakly convergence sequence such that
\begin{equation}
m_{i,\Lambda_n} \rightharpoonup m_i' \quad \text{in $L^p(d \mu)$} \quad \forall i=1,2
\end{equation}
This is combined with \eqref{equation 1}, we get $m_i' = m_i$ a.e and hence $m_i \in L^p(d \mu)$ and $H$ is closed by the Lemma \ref{H Lp to C}. Furthermore, the operator $H$ is a bounded map from $L^p(\mu)$ to $C^{\alpha}(\mathbb{R}\times \mathbb{R}^+)$(Lemma \ref{H Lp to C}),the system has a continuous solution $(m_1,m_2)$ in $\bigg( C^\alpha(\mathbb{R}\times \mathbb{R}^+) \bigg)^2$.

(Uniqueness) Let $(m_1,m_2)$ and $(m_1',m_2')$ be two solutions. Multiplying by $e^{-\beta t}$ for a large enough $\beta$ and use the same computation in the Lemma \ref{Cauchy convergence lemma}, we get $e^{-\beta t } m_i = e^{-\beta t}m_i' \quad \forall i=1,2$. Hence $m_i = m_i' \quad \forall i=1,2$. Indeed, we have
\begin{equation}
(\partial_t - \dfrac{1}{2} \partial_{xx})(m_1- F_1)= -V'(m_1) + \lambda(m_2-m_1) 
\end{equation}
Then
\begin{equation}
(\partial_t - \dfrac{1}{2}\partial_{xx})\bigg( e^{-\beta t}(m_1 - F_1) \bigg) = -e^{-\beta t}\bigg( V'(m_1)+\beta m_1 + \lambda(m_1 - m_2) \bigg) +\beta e^{-\beta t}F_1
\end{equation}
As a result,
\begin{equation}
\begin{aligned}
(&\partial_t - \dfrac{1}{2}\partial_{xx})\bigg(e^{-\beta t}(m_1'-m_1') \bigg)\\
&= -e^{-\beta t}\bigg \lbrace \bigg( V'(m_1')+\beta m_1' + \lambda(m_1' - m_2') \bigg)\\
&-\bigg( V'(m_1)+\beta m_1  + \lambda(m_1 - m_2)\bigg) \bigg \rbrace 
\end{aligned}
\end{equation}
Applying the Lemma \ref{1D: basic lemma 2},
\begin{equation}
\begin{aligned}
0 &\leq \int_\mathbb{R^+}\int_\mathbb{R} \bigg( e^{-\beta t}(m_1' - m_1)\bigg)^{2n+1}(\partial_t - \dfrac{1}{2}\partial_{xx})\bigg( e^{-\beta t}(m_1'-m_1) \bigg)d\mu\\
&= -\int_\mathbb{R^+}\int_\mathbb{R} e^{-(2n+2)\beta t}(m_1'-m_1)^{2n+1}\bigg( V'(m_1')+\beta m_1' -V'(m_1)-\beta m_1\bigg)d\mu \\
& - \int_\mathbb{R^+}\int_\mathbb{R} e^{-(2n+2)\beta t}(m_1'-m_1)^{2n+1}\bigg( m_1'-m_1' - (m_2'- m_2) \bigg)d\mu =: -\overline{A}_1 - \overline{A}_2
\end{aligned}
\end{equation}
where
\begin{equation}
\begin{aligned}
\overline{A}_1 &:= \int_\mathbb{R^+}\int_\mathbb{R} e^{-(2n+2)\beta t}(m_1'-m_1)^{2n+1}\bigg( V'(m_1')+\beta m_1' -V'(m_1)-\beta m_1\bigg)d\mu \\
\overline{A}_2 &:= \int_\mathbb{R^+}\int_\mathbb{R} e^{-(2n+2)\beta t}(m_1'-m_1)^{2n+1}\bigg( m_1'-m_1' - (m_2'- m_2) \bigg)d\mu 
\end{aligned}
\end{equation}
Setting $\overline{X}_1 : = m_1'-m_1$ and $\overline{X}_2 := m_2' - m_2$ and recalling 
\begin{equation}
(V'(x)+\beta x - V'(y) -\beta y)(x-y) \geq k(x-y)^4 \quad \forall x,y \in \mathbb{R}
\end{equation}
We get
\begin{equation}
\overline{A}_1 \geq \int_\mathbb{R^+}\int_\mathbb{R} e^{-(2n+2)\beta t}(m_1' - m_1)^{2n+4}d\mu \geq \norm{e^{-\beta t}(m_1'-m_1)}_{L^{2n+4}(\mu)}^{2n+4}
\end{equation}
and
\begin{equation}
\overline{A}_2 = \int_\mathbb{R^+}\int_\mathbb{R}e^{-(2n+2)\beta t}X_1^{2n+1}(X_1-X_2)d\mu
\end{equation}
and from $\overline{A}_1 + \overline{A}_2 \leq 0$, we derive
\begin{equation}\label{2d: overline A1}
\norm{e^{-\beta t}(m_1'-m_1)}_{L^{2n+4}(\mu)}^{2n+4} +\int_\mathbb{R^+}\int_\mathbb{R}e^{-(2n+2)\beta t}X_1^{2n+1}(X_1-X_2)d\mu \leq 0 
\end{equation}
Analogously, 
\begin{equation}\label{2d: overline A2}
\norm{e^{-\beta t}(m_2'-m_2)}_{L^{2n+4}(\mu)}^{2n+4} +\int_\mathbb{R^+}\int_\mathbb{R}e^{-(2n+2)\beta t}X_2^{2n+1}(X_2-X_1)d\mu \leq 0 
\end{equation}
Taking the sum of \eqref{2d: overline A1} and \eqref{2d: overline A2}, we have
\begin{equation}
\begin{aligned}
&\norm{e^{-\beta t}(m_1'-m_1)}_{L^{2n+4}(\mu)}^{2n+4} + \norm{e^{-\beta t}(m_2'-m_2)}_{L^{2n+4}(\mu)}^{2n+4}\\
&+ \int_\mathbb{R^+}\int_\mathbb{R}e^{-(2n+2)\beta t}\bigg( X_1^{2n+1}-X_2^{2n+1} \bigg)(X_1-X_2)d\mu \leq 0
 \end{aligned}
\end{equation}
Since the third term on LHS is non-negative, it is easy to have
\begin{equation}
\norm{e^{-\beta t}(m_i'-m_i)}_{L^{2n+4}(\mu)} = 0 \quad \forall i=1,2
\end{equation}
As a result,
\begin{equation}
m_i' = m_i \quad \forall i =1,2
\end{equation}
\end{proof}
\phantomsection
\addcontentsline{toc}{chapter}{Bibliography}\bibliographystyle{plain}
\bibliography{Thesis_ref}
\Addresses
\end{document}